\newtheorem{theorem}{Theorem}[section]
\newtheorem{corollary}[theorem]{Corollary}
\newtheorem{lemma}[theorem]{Lemma}
\newtheorem{proposition}[theorem]{Proposition}
\theoremstyle{remark}
\newtheorem{remark}[theorem]{Remark}
\newcommand{\CA}{\mathcal{A}}
\newcommand{\CC}{\mathcal{C}}
\newcommand{\CI}{\mathcal{I}}
\newcommand{\calT}{\mathcal{T}}
\newcommand{\Hom}{\mathrm{Hom}}
\newcommand{\Ker}{\mathrm{Ker}}
\newcommand{\End}{\mathrm{End}}
\newcommand{\fp}{\mathrm{fp}}
\newcommand{\Add}{\mathrm{ Add}}
\newcommand{\Prod}{\mathrm{ Prod}}
\newcommand{\Modr}{\mathrm{ Mod}\text{-}}
\title[]{Comparing $\Add(M)$ with $\Prod(M)$}
\author{Simion Breaz, Cristian Rafiliu}
\address{"Babe\c s-Bolyai" University, Faculty of Mathematics and Computer Science, Str. Mihail Kog\u alniceanu 1, 400084, Cluj-Napoca, Romania}
\email[Simion Breaz]{simion.breaz@ubbcluj.ro}
\email[Cristian Rafiliu]{cristian.rafiliu@ubbcluj.ro}
\begin{document}
	
	\begin{abstract}
We present characterizations for the inclusions $\Add(M)\subseteq \Prod(M)$ and $\Prod(M)\subseteq \Add(M)$ in locally finitely presented categories and in compactly generated triangulated categories. As applications, we describe the situations when the classes of the form $\Prod(M)$ and $\Add(M)$ are (pre)covering, respectively (pre)enveloping.  
	\end{abstract}
	
	\subjclass[2020]{Primary: 18G80, 18A25; Secondary: 18G05; 20K25.}
	
	\keywords{Locally finitely presented categories, $\Sigma$-pure-injective objects, product-complete objects, Chase's Lemma, precovering class, preenveloping class}
	\maketitle
	
\section{Introduction}

Let $\CA$ be an additive category with direct sums and products. If $M\in \CA$ then $\Add(M)$ and $\Prod(M)$ denote the class of all direct summands of direct sums, respectively of direct products of copies of $M$. If $\CA$ is a module category the situations where one of these classes is contained in the other, investigated in \cite{angeleri2003}, \cite{angeleri2002} and \cite{Kr-Sa}, lead to useful properties of $M$. For instance, a module $M$ has the property $\Prod(M)\subseteq \Add(M)$, such modules were introduced in \cite{Kr-Sa} under the name \textit{product-complete}, if and only if $\Add(M)$ is an (pre)enveloping class (\cite[Theorem 5.1]{angeleri2003}, \cite[Theorem 2.3]{Kr-Sa}). They play an important role in tilting theory \cite{AnSaTr}. Recently, this property was also used in locally finitely presented categories, \cite{Krause2022}, and in compactly generated triangulated categories, \cite{HM24}.    
On the other hand, the study and the applications of the inclusion $\Add(M)\subseteq \Prod(M)$ depend on some set-theoretic hypotheses. In the absence of $\omega$-measurable cardinals, it was proved that $\Add(M)\subseteq \Prod(M)$ if and only if $M$ is $\Sigma$-pure-injective module (see \cite{Breaz2015} and \cite{Saroch2015}). However, \v Saroch proved in \cite{Saroch2015} that if we assume that there are some large cardinals then in every module category there exists a free module $F$ such that $\Add(F)\subseteq \Prod(F)$. Such modules are used in \cite{Co-Sa} and \cite{Sa-Tr}.

Our first aim is to study these inclusions for locally finitely presented categories. To do this, we will start with an analysis of Chase's Lemma \cite{Chase1962}, since this is an important tool in proving characterizations for $\Sigma$-pure-injective modules, see \cite{Zimmermann-Huisgen1979}. The usual statement of this lemma is about maps defined on countable direct products. We also need a version when the domains are direct products indexed by non $\omega$-measurable cardinals, a generalization of \cite[Theorem 2]{Dugas&Zimmermann-Huisgen1981}. 
We will use this to prove in Theorem \ref{thm:lfp-Add-subset-Prod} that an object $M$, with $\Hom(X,M)$ non $\omega$-measurable for all finitely presented objects $X$, is $\Sigma$-pure-injective if and only if $\Add(M)\subseteq \Prod(M)$. As an application, in Corollary \ref{cor:angeleri-6.11} we prove that if we assume that there are no $\omega$-measurable cardinals, then the class $\Prod(M)$ is (pre)covering if and only if $M$ is $\Sigma$-pure-injective. This leads to the conclusion that, if we assume that there are no $\omega$-measurable cardinals, Enochs' Conjecture is true for classes of the form $\Prod(M)$ in locally finitely presented categories, Corollary \ref{cor:enochs-prod-lfp}. 
A characterization for $\Prod(M)\subseteq \Add(M)$ 
is presented in Theorem \ref{thm:productcompleteA}. This theorem extends \cite[Proposition 5.2]{angeleri2002} (see also \cite[Proposition 4.8]{Angeleri2000}) from module categories to locally finitely presented categories. Moreover, $\Add(M)$ is (pre)enveloping if and only if $M$ is product-complete (Corollary \ref{cor:add-env}). 

In the last part of the paper we will use the results obtained for locally finitely presented categories to approach the same problems in compactly generated triangulated categories. As expected, we have similar characterizations. We include in Corollary \ref{cor:Enochs-triangulated} a positive answer for classes of the form $\Prod(M)$ to a triangulated version of Enochs's conjecture. In Theorem \ref{thm:triang-Prod-subset-Add} we also give a triangulated version of \cite[Proposition 12.3.7]{Krause2022}.
    
\section{A Chase-type Lemma}
	
A tuple $$( (A_i)_{i\in I}, (B_j)_{j\in J}, \varphi, ((A_{i,k})_{k\in\mathbb{N}})_{i\in I}, ((B_{j,k})_{k\in\mathbb{N}})_{j\in J})$$ is called a \textit{Chase system} if
\begin{itemize}
    \item $(A_i)_{i\in I}$ and $(B_j)_{j\in J}$ are two familes of abelian groups,
    \item $\varphi:\prod_{i\in I} A_i\to \bigoplus_{j\in J} B_j$ is a morphism of abelian groups,
    \item for all $i\in I$, $(A_{i,k})_{k\in\mathbb{N}}$ is a descending family of subgroups of $A_i$, 
    \item for all $j\in J$, $(B_{j,k})_{k\in\mathbb{N}}$ is a descending family of subgroups of $B_j$, 
    \item for all $k\in \mathbb{N}$, $\varphi(\prod_{i\in I} A_{i,k})\subseteq \bigoplus_{j\in J} B_{j,k}$.
\end{itemize}

The following result is a version for Chase's Lemma.
    
\begin{lemma}\label{lem:countable-Chase}
Let 
$$( (A_i)_{i\in \mathbb{N}}, (B_j)_{j\in J}, \varphi, ((A_{i,k})_{k\in\mathbb{N}})_{i\in \mathbb{N}}, 
((B_{j,k})_{k\in\mathbb{N}})_{j\in J})$$ be a Chase system. Then there exists a finite subset $J'\subseteq J$ and two positive integers $m_0$ and $n_0$ such that
$$\textstyle\varphi\left( \prod_{i\geq m_0} A_{i,n_0}\right) \subseteq \bigoplus_{j\in J'} B_j + \bigcap_{n\in\mathbb{N}} \left( \bigoplus_{j\in J} B_{j,n}\right).$$
	\end{lemma}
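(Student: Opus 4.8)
The plan is to argue by contradiction and, under the negation of the conclusion, to manufacture a single element $a\in\prod_{i}A_i$ whose image $\varphi(a)$ has infinite support, contradicting $\varphi(a)\in\bigoplus_{j\in J}B_j$. Negating the statement, assume that for every finite $J'\subseteq J$ and all $m,n$ one has
$$\textstyle\varphi\Big(\prod_{i\geq m}A_{i,n}\Big)\not\subseteq\bigoplus_{j\in J'}B_j+\bigcap_{k}\Big(\bigoplus_{j}B_{j,k}\Big).$$
I will build strictly increasing integers $0=m_0<m_1<\cdots$, strictly increasing depths $n_1<n_2<\cdots$, elements $b_t\in\prod_{i\geq m_{t-1}}A_{i,n_t}$ (whose support, crucially, need not be finite), and pairwise distinct indices $j_t\in J$, and then set $a=\sum_{t\geq 1}b_t$. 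Writing $a^{(t)}=\sum_{s\leq t}b_s$, the element $a$ is well defined in $\prod_i A_i$: since $b_t$ is supported on $[m_{t-1},\infty)$ and $m_{t-1}\to\infty$, each coordinate of $a$ receives only finitely many contributions.

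For the inductive step, set $F_{t-1}=\{j_1,\dots,j_{t-1}\}\cup\supp\varphi(a^{(t-1)})$, a finite subset of $J$. Applying the negation with $m=m_{t-1}$, $n=n_t$ and $J'=F_{t-1}$ produces $b_t$ with $\varphi(b_t)\notin\bigoplus_{j\in F_{t-1}}B_j+\bigcap_k\bigoplus_j B_{j,k}$. Because $\varphi(b_t)$ lies in the direct sum, it has finite support, and a short argument then yields a coordinate $j_t\notin F_{t-1}$ with $\varphi(b_t)_{j_t}\notin\bigcap_k B_{j_t,k}$: otherwise the $F_{t-1}$-part of $\varphi(b_t)$ would lie in $\bigoplus_{j\in F_{t-1}}B_j$ while the remaining finitely many coordinates, each in $\bigcap_k B_{j,k}$, would place the rest in $\bigcap_k\bigoplus_j B_{j,k}$, exhibiting $\varphi(b_t)$ in the forbidden set. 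Since $j_t\notin\supp\varphi(a^{(t-1)})$, we get $\varphi(a^{(t)})_{j_t}=\varphi(b_t)_{j_t}\notin\bigcap_k B_{j_t,k}$; I then choose a depth $e_t$ witnessing $\varphi(a^{(t)})_{j_t}\notin B_{j_t,e_t}$ and put $n_{t+1}=\max(e_t,n_t+1)$, so that $\varphi(a^{(t)})_{j_t}\notin B_{j_t,n_{t+1}}$.

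The decisive telescoping estimate is that the tail $a-a^{(t)}=\sum_{s>t}b_s$ is supported on $[m_t,\infty)$ and, by the increasing depths, every one of its coordinates lies in depth at least $n_{t+1}$; hence $a-a^{(t)}\in\prod_i A_{i,n_{t+1}}$ and the Chase-system compatibility gives $\varphi(a-a^{(t)})\in\bigoplus_j B_{j,n_{t+1}}$. Consequently $\varphi(a)_{j_t}\equiv\varphi(a^{(t)})_{j_t}\pmod{B_{j_t,n_{t+1}}}$, and since $\varphi(a^{(t)})_{j_t}\notin B_{j_t,n_{t+1}}$ we obtain $\varphi(a)_{j_t}\neq 0$ for every $t$. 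As the $j_t$ are distinct, $\varphi(a)$ has infinite support, which is the contradiction.

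The main obstacle to be navigated is that the negation supplies only witnesses $b_t$ of possibly infinite support, and one genuinely cannot replace them by finitely supported elements retaining the escape property: a finite truncation controls a coordinate only modulo the shallow subgroup $B_{j,n_t}$, and so may well cancel an escape occurring at a strictly deeper level. The construction sidesteps this entirely by never truncating. Convergence of $\sum_t b_t$ is secured by the receding supports $[m_{t-1},\infty)$, while the strictly increasing depths force the accumulated tail error $\varphi(a-a^{(t)})$ into arbitrarily deep subgroups $B_{j,n_{t+1}}$, which is exactly what preserves each freshly produced nonzero coordinate $j_t$ in the limit. Balancing these two competing demands, namely convergence through receding supports against escape-preservation through increasing depths, is the heart of the matter; the remaining steps are routine bookkeeping. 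I note finally that the domain here is indexed by $\mathbb{N}$, so no cardinality or measurability hypothesis enters this countable version.
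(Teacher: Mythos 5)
Your proof is correct and follows essentially the same approach as the paper's: argue by contradiction, inductively build elements with receding supports and strictly increasing depths, choose a fresh index $j_t$ outside the support of the previously accumulated image, sum to a well-defined element of the product, and use the depth estimate on the tail $\varphi(a-a^{(t)})\in\bigoplus_j B_{j,n_{t+1}}$ to force infinitely many nonzero coordinates of $\varphi(a)$, contradicting membership in the direct sum. The only cosmetic differences are that the paper normalizes $m_0=n_0$ (so its witnesses live in $\prod_{i\geq n_k}A_{i,n_k}$) and encodes your condition $j_t\notin\supp\,\varphi(a^{(t-1)})$ as the requirement $q_{j_k}\varphi(x_l)=0$ for $l<k$.
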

	
\begin{proof} We observe that it is enough to prove the conclusion for $m_0=n_0$. 
Following the steps of Chase \cite{Chase1962}, we assume by contradiction that the conclusion is false. 

In particular, for every $n\in \mathbb{N}$ and for every finite subset $J'\subseteq J$ we have $\varphi\left( \prod_{i\geq n} A_{i,n}\right) \nsubseteq \bigoplus_{j\in J'}B_j+ \bigcap_{n\in\mathbb{N}} \left( \bigoplus_{j\in J} B_{j,n}\right)$.
We will construct a strictly ascending sequence $(n_k)_{k\in \mathbb{N}}$ of positive integers, a sequence $(j_k)_{k\in \mathbb{N}}$ of elements of $J$ and a sequence $(x_k)_{k\in\mathbb{N}}$ with $x_k\in \prod_{i\geq n_k}A_{i,n_k}$, such that for any $k\in \mathbb{N}$ we have:
		\begin{itemize}
			\item[(a)] $q_{j_k}\varphi(x_k)\notin B_{j_k,n_{k+1}}$;
			\item[(b)] $q_{j_k}\varphi(x_l) = 0$, for all $l<k$,
		\end{itemize}
where $q_{j_k}:\bigoplus_{j\in J} B_j\to B_{j_k}$ are the natural projections.

For $k=0$ take $n_0=0$. It follows that there exists $x_0\in \prod_{i\geq 0}A_{i,0}$ such that $\varphi(x_0)\notin \bigcap_{n\in\mathbb{N}} \left( \bigoplus_{j\in J} B_{j,n}\right)$. This implies that there exists $n_1\in \mathbb{N}$ such that $\varphi(x_0)\notin \bigoplus_{j\in J} B_{j,n_1}$, hence there exists $j_0\in J$ such that $q_{j_0}\varphi(x_0)\notin B_{j_0,n_{1}}$. 

Let $J'=\{j\in J\mid q_j\varphi(x_0)\neq 0\}$. Since $\varphi\left( \prod_{i\geq n_1} A_{i,n_1}\right)$ is not contained in $\bigoplus_{j\in J'}B_j+ \bigcap_{n\in\mathbb{N}} \left( \bigoplus_{j\notin J'} B_{j,n}\right)\subseteq \bigoplus_{j\in J'}B_j+ \bigcap_{n\in\mathbb{N}} \left( \bigoplus_{j\in J} B_{j,n}\right),$ there exists $x_1\in \prod_{i\geq n_1}A_{i,n_1}$, $j_1\notin J'$ and $n_2\in\mathbb{N}$ such that  $\varphi(x_1)\notin B_{j_1,n_2}$. Since the chains of subgroups are descending, we observe that we can take $n_2\geq n_1$. 

Consequently, the elements $x_0,x_1, n_1, n_2, j_0,$ and $j_1$ satisfy the conditions (a) and (b). We can continue in the same manner, and we obtain the required sequences by induction.

We define $x = \sum_{k\in \mathbb{N}}x_k\in \prod_{i\in\mathbb{N}} A_i$ (because $x_k\in \prod_{i\geq n_k}A_i$, for every $i\in\mathbb{N}$ the $i$-th components of the elements $x_k$ are almost all $0$). 

Observe that for $l>k$ we have $x_l\in \prod_{i\geq n_l} A_{i,n_l}\subseteq \prod_{i\geq n_l} A_{i,n_{k+1}}\subseteq \prod_{i\in\mathbb{N}} A_{i,n_{k+1}}$, so $\varphi(x_l)\in \varphi(\prod_{i\in\mathbb{N}} A_{i,n_{k+1}})\subseteq \bigoplus_{j\in J} B_{j,n_{k+1}}.$
It follows that for all $k\in \mathbb{N}$ we obtain
		$$q_{j_k}\varphi(x) = q_{j_k}\varphi(\sum_{l<k}x_l) + q_{j_k}\varphi(x_k) + q_{j_k}\varphi(\sum_{l>k}x_l)\neq 0,$$
because $q_{j_k}\varphi(\sum_{l<k}x_l)=0$, $q_{j_k}\varphi(x_k)\notin B_{j_k,n_{k+1}}$, and  $q_{j_k}\varphi(\sum_{l>k}x_l)\in B_{j_k,n_{k+1}}$.
This contradicts the fact that $\varphi(x)\in \bigoplus_{j\in J}B_j$, and the proof is complete.
\end{proof}
	
\begin{remark}
Similar results, proved with similar techniques, are also provided in \cite[Lemma 4.8]{Bennett-Tennenhaus2023}, in the language of model theory, and in \cite[Lemma 4]{Krause2021} for subgroups of finite definition. 
\end{remark}
    	
In the following, we will give a more general version of the previous version of Chase's Lemma. It extends similar results proved in \cite{Breaz2015} and \cite{Dugas&Zimmermann-Huisgen1981}, where the chains of subgroups are values of some descending chains of subfunctors of the identity functor. 

A cardinal $\kappa=|I|$ (or the set $I$) is called \textit{$\omega$-measurable} if it is uncountable and there exists a countably-additive, non-trivial, $\{0,1\}$-valued measure $\mu$, whose $\sigma$-algebra is the power set of $I$ such that $\mu(I) = 1$ and $\mu(\{x\}) = 0$ for all $x\in I$. We refer to \cite{Ek-Me} for details about (non) $\omega$-measurable cardinals.
The next lemma gives us an easy way to prove that a cardinal is $\omega$-measurable. The reader can find proofs for this in the proofs of \cite[Theorem 2]{Dugas&Zimmermann-Huisgen1981} and \cite[Proposition 26]{BZ}.

\begin{lemma}\label{lem:measurable}
Let $I$ be an uncountable set. If there exists a subset $\CI$ in the power-set of $I$ such that
\begin{enumerate}[{\rm (i)}]
\item $I\notin \CI$,
    \item if $F\subseteq I$ is finite then $F\in \CI$,
    \item $\CI$ is closed with respect to finite unions,
    \item If $X\in \CI$ and $Y\subseteq X$ then $Y\in \CI$,
    \item if $(X_n)_{n\geq 0}$ is a countable family of pairwise disjoint subsets of $I$ then there exists $n_0$ such that $\bigcup_{n\geq n_0}X_n\in \CI$,  
\end{enumerate}
then $|I|$ is $\omega$-measurable. 
\end{lemma}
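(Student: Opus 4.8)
The plan is to produce the required measure by exhibiting a nonprincipal, countably complete ultrafilter on $I$ and taking its $\{0,1\}$-valued indicator; this is the standard Tarski--Ulam correspondence, so the real work lies entirely in the combinatorics of $\CI$ supplied by (i)--(v). I expect almost everything except one step to be routine.

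First I would upgrade condition (v) to genuine $\sigma$-completeness of $\CI$, i.e.\ that \emph{every} countable union of members of $\CI$ again lies in $\CI$. Given $(A_n)_{n\geq 0}$ in $\CI$, I would disjointify by setting $A_n' = A_n\setminus\bigcup_{m<n}A_m\subseteq A_n$, so that the $A_n'$ are pairwise disjoint and lie in $\CI$ by (iv), while $\bigcup_n A_n' = \bigcup_n A_n$. Condition (v) then yields $n_0$ with $\bigcup_{n\geq n_0}A_n'\in\CI$; the finite initial segment $\bigcup_{n<n_0}A_n'$ lies in $\CI$ by (iii), and one further application of (iii) puts the whole union in $\CI$. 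Combined with (i), (ii), (iv), this shows $\CI$ is a proper $\sigma$-complete ideal containing all finite subsets of $I$.

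The crux is to locate an \emph{atom modulo $\CI$}: a set $A\notin\CI$ such that for every $Y\subseteq A$ either $Y\in\CI$ or $A\setminus Y\in\CI$. I expect this to be the main obstacle, and it is precisely where the disjointness clause of (v) is indispensable. I would argue by contradiction: if no such $A$ exists, then every set not in $\CI$ splits as a disjoint union of two sets not in $\CI$. Starting from $C_0=I$, which is not in $\CI$ by (i), I would recursively write $C_n=A_n\sqcup C_{n+1}$ with both $A_n\notin\CI$ and $C_{n+1}\notin\CI$, producing pairwise disjoint sets $A_0,A_1,A_2,\dots$, none lying in $\CI$. This contradicts (v): the tail $\bigcup_{n\geq n_0}A_n$ would have to belong to $\CI$, yet it contains $A_{n_0}\notin\CI$, against (iv). Hence an atom $A$ exists.

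Finally I would convert the atom into the desired measure. Set $\mathcal{U}=\{Y\subseteq I : A\setminus Y\in\CI\}$, and define $\mu(Y)=1$ if $Y\in\mathcal{U}$ and $\mu(Y)=0$ otherwise. The atom property makes $\mathcal{U}$ an ultrafilter, since for each $Y$ either $A\cap Y\in\CI$ (forcing $I\setminus Y\in\mathcal{U}$) or $A\setminus Y\in\CI$ (forcing $Y\in\mathcal{U}$); the $\sigma$-completeness of $\CI$ established above, via $A\setminus\bigcap_n Y_n=\bigcup_n(A\setminus Y_n)$, makes $\mathcal{U}$ closed under countable intersections and hence $\mu$ countably additive; and (ii) places every cofinite set in $\mathcal{U}$, so $\mu(\{x\})=0$ for all $x\in I$ while $\mu(I)=1$, giving a non-trivial measure. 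Since $I$ is uncountable by hypothesis, $|I|$ is $\omega$-measurable. These last verifications are routine once the atom is in hand, so the entire weight of the argument rests on the splitting step of the previous paragraph.
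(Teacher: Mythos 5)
Your proof is correct and complete: disjointification upgrades (v) to $\sigma$-completeness of $\mathcal{I}$, the splitting recursion (using that (v) applies to \emph{arbitrary} pairwise disjoint families, not just families inside $\mathcal{I}$) produces an atom $A$ modulo $\mathcal{I}$, and the countably complete nonprincipal ultrafilter $\{Y\subseteq I: A\setminus Y\in\mathcal{I}\}$ yields the required $\{0,1\}$-valued measure. Note that the paper itself gives no proof of this lemma, deferring instead to the proofs of \cite[Theorem 2]{Dugas&Zimmermann-Huisgen1981} and \cite[Proposition 26]{BZ}; your argument is essentially the standard one carried out in those sources, so there is no substantive divergence to report.
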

    
\begin{lemma}\label{lemma2}\label{lem:chase-uncountable}
Suppose that the set $I$ is not $\omega$-measurable. If 
$$( (A_i)_{i\in I}, (B_j)_{j\in J}, \varphi, ((A_{i,k})_{k\in\mathbb{N}})_{i\in I}, ((B_{j,k})_{k\in\mathbb{N}})_{j\in J})$$ is a Chase system, then there are two finite subsets $I'\subseteq I, J'\subseteq J$ and a positive integer $n_0$ such that
$$\textstyle \varphi\left( \prod_{i\in I\backslash I'} A_{i,n_0}\right) \subseteq \bigoplus_{j\in J'} B_j + \bigcap_{n\in\mathbb{N}} \left( \bigoplus_{j\in J} B_{j,n}\right).$$
\end{lemma}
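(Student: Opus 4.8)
The plan is to reduce everything to the countable case (Lemma~\ref{lem:countable-Chase}) together with the measurability criterion (Lemma~\ref{lem:measurable}). If $I$ is finite the statement is trivial (take $I'=I$, so the product on the left is zero), and if $I$ is countably infinite a bijection $I\cong\mathbb{N}$ turns the sets $\{i\geq m_0\}$ of Lemma~\ref{lem:countable-Chase} into complements of finite subsets, which is exactly the form required here. So I would concentrate on the case where $I$ is uncountable; since $I$ is not $\omega$-measurable, it then suffices to derive a contradiction from the assumption that the conclusion fails.

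Assume then that for every finite $I'\subseteq I$, every finite $J'\subseteq J$ and every $n_0$ one has $\varphi(\prod_{i\in I\setminus I'}A_{i,n_0})\not\subseteq \bigoplus_{j\in J'}B_j+\bigcap_n(\bigoplus_{j\in J}B_{j,n})$. I would call a subset $S\subseteq I$ \emph{negligible}, placing it in a family $\CI$, if there exist a finite $I'\subseteq I$, a finite $J'\subseteq J$ and an $n_0\in\mathbb{N}$ with $\varphi(\prod_{i\in S\setminus I'}A_{i,n_0})\subseteq \bigoplus_{j\in J'}B_j+\bigcap_n(\bigoplus_{j\in J}B_{j,n})$, where throughout $\prod_{i\in T}A_{i,k}$ is read as the subgroup of $\prod_{i\in I}A_i$ of tuples supported on $T$ with entries in $A_{i,k}$. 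The goal is to verify that $\CI$ satisfies (i)--(v) of Lemma~\ref{lem:measurable}, which then forces $|I|$ to be $\omega$-measurable, the desired contradiction.

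Conditions (i)--(iv) I expect to be routine. Condition (i), $I\notin\CI$, is exactly the contradiction hypothesis (taking $I'=\emptyset$). Condition (ii) holds since for finite $F$ one takes $I'=F$, making the relevant product trivial. For (iv), if $T\subseteq S\in\CI$ then any witness for $S$ also witnesses $T$, because $\prod_{i\in T\setminus I'}A_{i,n_0}\subseteq \prod_{i\in S\setminus I'}A_{i,n_0}$. For (iii), given witnesses $(I'_1,J'_1,n_1)$ and $(I'_2,J'_2,n_2)$ for $S_1,S_2$, the triple $(I'_1\cup I'_2,\ J'_1\cup J'_2,\ \max(n_1,n_2))$ works for $S_1\cup S_2$: one splits a tuple supported on $(S_1\cup S_2)\setminus I'$ into its parts on $S_1\setminus I'$ and on $(S_2\setminus S_1)\setminus I'$, and uses that the chains are descending so each part lands in the corresponding smaller product.

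The main point, and the step I expect to carry the weight of the argument, is condition (v). Given a countable family $(X_n)_{n\in\mathbb{N}}$ of pairwise disjoint subsets of $I$, I would manufacture an auxiliary \emph{countable} Chase system and feed it to Lemma~\ref{lem:countable-Chase}. Set $\tilde A_n=\prod_{i\in X_n}A_i$ with descending chains $\tilde A_{n,k}=\prod_{i\in X_n}A_{i,k}$, keep $(B_j)_{j\in J}$ and $(B_{j,k})$ unchanged, and let $\tilde\varphi$ be the composite of the canonical identification $\prod_{n}\tilde A_n\cong\prod_{i\in\bigcup_n X_n}A_i$ (valid precisely because the $X_n$ are disjoint), the extension-by-zero embedding into $\prod_{i\in I}A_i$, and $\varphi$. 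The compatibility condition $\tilde\varphi(\prod_n\tilde A_{n,k})\subseteq\bigoplus_j B_{j,k}$ is inherited from that of the original system. Lemma~\ref{lem:countable-Chase} then supplies a finite $J'\subseteq J$ and integers $m_0,n_0$ with $\tilde\varphi(\prod_{n\geq m_0}\tilde A_{n,n_0})\subseteq \bigoplus_{j\in J'}B_j+\bigcap_n(\bigoplus_{j\in J}B_{j,n})$; unwinding the identifications, the left-hand side is $\varphi(\prod_{i\in\bigcup_{n\geq m_0}X_n}A_{i,n_0})$, so $\bigcup_{n\geq m_0}X_n\in\CI$ (with $I'=\emptyset$), which is precisely (v). With all five conditions in hand, Lemma~\ref{lem:measurable} makes $|I|$ $\omega$-measurable, contradicting the hypothesis and completing the proof.
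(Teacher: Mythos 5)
Your proposal is correct and follows essentially the same route as the paper: both define the family $\CI$ of subsets of $I$ on which the conclusion holds (the paper phrases this via restricted Chase systems, you via supports, which is the same thing), verify conditions (ii)--(v) of Lemma~\ref{lem:measurable} --- with the key step (v) handled identically by regrouping the product over a disjoint countable union as a countable product of products and invoking Lemma~\ref{lem:countable-Chase} --- and then conclude from non-$\omega$-measurability of $I$ that $I\in\CI$. The only difference is presentational (you frame it as an explicit contradiction, and you spell out the routine verifications of (ii)--(iv) that the paper leaves to the reader), so nothing substantive separates the two arguments.
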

	
\begin{proof}
If $I$ is finite, the conclusion is obvious. For the case $I$ countable, we apply Lemma \ref{lem:countable-Chase}.

Suppose that $|I|>\aleph_0$. 
We consider the set $\mathcal{I}$ of all subsets $T\subseteq I$ with the property that the conclusion of our lemma is valid for the Chase system 
        $$\textstyle ( (A_i)_{i\in T}, (B_j)_{j\in J}, \varphi_{|\prod_{i\in T}A_i}, ((A_{i,k})_{k\in\mathbb{N}})_{i\in T}, ((B_{j,k})_{k\in\mathbb{N}})_{j\in J}),$$
where $\varphi_{|\prod_{i\in T}A_i}$ denotes the restriction of $\varphi$ to $\prod_{i\in T}A_i$. We will prove that $\CI$ satisfies the conditions (ii)--(v) from Lemma \ref{lem:measurable}. 

It is not hard to verify the conditions (ii), (iii) and (iv) listed in Lemma \ref{lem:measurable}. For (v), let $(I_n)_{n\in\mathbb{N}}$ be a family of pairwise disjoint subsets of $I$. We observe first that we can use the identification $$\textstyle\prod_{i\in\bigcup_{n\in\mathbb{N}} I_n} A_i =  \prod_{n\in\mathbb{N}}\left( \prod_{i\in I_n} A_i\right)$$
to obtain the Chase system
		$$\textstyle\left( \left( \prod_{i\in I_n} A_i\right)_{n\in \mathbb{N}}, (B_j)_{j\in J}, \varphi', \left( \left( \prod_{i\in I_n} A_{i,k}\right)_{k\in\mathbb{N}}\right)_{n\in \mathbb{N}}, ((B_{j,k})_{k\in\mathbb{N}})_{j\in J}\right), $$
where $\phi'$ is the restriction of $\varphi$ to $\prod_{n\in\mathbb{N}}\left( \prod_{i\in I_n} A_i\right)$.
We apply Lemma \ref{lem:countable-Chase}, and it follows that $\cup_{n\geq m_0} I_n\in \mathcal{I}$.
		
Since the set $I$ is not $\omega$-measurable, it follows that $I\in \CI$, and the proof is complete.
	\end{proof}

\begin{remark}
Lemma \ref{lem:chase-uncountable} is not valid if $I$ is $\omega$-measurable. A counterexample is constructed in \cite[Example 3]{Dugas&Zimmermann-Huisgen1981}.   
\end{remark}
	
Chase's Lemma is often used to provide descending chain conditions. In this paper, we will use the following proposition. 

\begin{proposition}\label{proposition}
Let $$\textstyle ((A_i)_{i\in I}, (B_j)_{j\in J}, \varphi: \prod_{i\in I} A_i \to \bigoplus_{j\in J} B_j, ((A_{i,k})_{k\in\mathbb{N}})_{i\in I},  ((B_{j,k})_{k\in\mathbb{N}})_{j\in J})$$ be a Chase system. 

If
\begin{enumerate}[{\rm (i)}]
    \item $I$ is a set of non-$\omega$-measurable cardinality, 
    \item $J$ an infinite set with $|J|>|A_i|$ for all $i\in I$, 
    \item $\varphi$ is an epimorphism, and 
    \item the restrictions $\varphi|_{\prod_{i\in I} A_{i,k}}: \prod_{i\in I} A_{i,k}\to \bigoplus_{j\in J} B_{j,k}$ are epimorphisms for all $k\in\mathbb{N}$,\end{enumerate} then there exists a positive integer $n_0$ and an infinite subset $L\subseteq J$ such that $B_{j,n}=B_{j,n_0}$ for all $j\in L$ and all $n\geq n_0$.
\end{proposition}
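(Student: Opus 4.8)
The plan is to apply Lemma \ref{lem:chase-uncountable} to the given system and then extract the stabilization by projecting onto individual coordinates, using hypothesis (ii) to guarantee that the set of exceptional indices is too small to exhaust $J$.

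First I would record the identification $\bigcap_{n\in\mathbb{N}}\bigl(\bigoplus_{j\in J} B_{j,n}\bigr) = \bigoplus_{j\in J}\bigl(\bigcap_{n\in\mathbb{N}} B_{j,n}\bigr)$, which holds because every element of a direct sum has finite support. Since $I$ is not $\omega$-measurable, Lemma \ref{lem:chase-uncountable} yields finite subsets $I'\subseteq I$, $J'\subseteq J$ and a positive integer $n_0$ with
$$\textstyle\varphi\bigl(\prod_{i\in I\backslash I'} A_{i,n_0}\bigr)\subseteq \bigoplus_{j\in J'} B_j + \bigoplus_{j\in J}\bigl(\bigcap_{n} B_{j,n}\bigr).$$

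Next I would exploit surjectivity. Because $I'$ is finite, $\prod_{i\in I} A_{i,n_0}=\prod_{i\in I'} A_{i,n_0}\times\prod_{i\in I\backslash I'} A_{i,n_0}$, so by hypothesis (iv) applied at level $n_0$,
$$\textstyle\bigoplus_{j\in J} B_{j,n_0}=\varphi\bigl(\prod_{i\in I} A_{i,n_0}\bigr)\subseteq C+\bigoplus_{j\in J'} B_j+\bigoplus_{j\in J}\bigl(\bigcap_n B_{j,n}\bigr),$$
where $C:=\varphi\bigl(\prod_{i\in I'} A_{i,n_0}\bigr)$, the factors outside $I'$ being set to zero. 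Let $S_C\subseteq J$ be the union of the finite supports of the elements of $C$. For each index $j\in J\backslash(J'\cup S_C)$, applying the coordinate projection $q_j$ annihilates both $C$ and $\bigoplus_{j\in J'}B_j$, so the displayed inclusion collapses to $B_{j,n_0}\subseteq\bigcap_n B_{j,n}$; since the chain is descending, this forces $B_{j,n}=B_{j,n_0}$ for all $n\geq n_0$.

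It remains to check that $L:=J\backslash(J'\cup S_C)$ is infinite, and this is the one place where the cardinality hypothesis (ii) does real work. The domain $\prod_{i\in I'} A_{i,n_0}$ is a finite product of sets each of cardinality at most $|A_i|<|J|$, so $|C|<|J|$, and since each element of $C$ has finite support, $|S_C|\leq |C|\cdot\aleph_0<|J|$ as well; when $|J|=\aleph_0$ the hypothesis forces every $A_i$, hence $C$ and $S_C$, to be finite. Adjoining the finite set $J'$ does not change this, so $|J'\cup S_C|<|J|$ and therefore $L$ has cardinality $|J|$, in particular infinite. I expect the cardinal arithmetic bounding $|S_C|$ to be the only delicate point; the algebraic heart of the argument, namely the projection step producing $B_{j,n_0}\subseteq\bigcap_n B_{j,n}$, is immediate once the inclusion from Lemma \ref{lem:chase-uncountable} is in hand.
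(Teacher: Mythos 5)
Your proof is correct, and it reaches the conclusion by a slightly different finishing technique than the paper, although both arguments share the same skeleton: apply Lemma \ref{lem:chase-uncountable}, then combine hypothesis (iv) at level $n_0$ with the cardinality hypothesis (ii) and the finiteness of $I'$. The paper first refines the conclusion of Lemma \ref{lem:chase-uncountable} to $\varphi\left(\prod_{i\in I\setminus I'} A_{i,n_0}\right) \subseteq \bigoplus_{j\in J'} B_{j,n_0} + \bigoplus_{j\in J\setminus J'}\left(\bigcap_{n} B_{j,n}\right)$ by invoking the algorithm of \cite[Proposition 2.3]{Breaz2015}, and then uses the surjectivity in (iv) to induce an epimorphism from $\prod_{i\in I} A_{i,n_0}\big/\prod_{i\in I\setminus I'} A_{i,n_0} \cong \prod_{i\in I'} A_{i,n_0}$ onto $\bigoplus_{j\in J\setminus J'} B_{j,n_0}\big/\bigcap_{n} B_{j,n}$; hypothesis (ii) bounds the cardinality of this target by $\left|\prod_{i\in I'} A_{i,n_0}\right| < |J|$, so the set of indices with nonzero quotient has cardinality $<|J|$ and its complement is the desired $L$. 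You avoid quotient groups entirely: your observation that $\bigcap_{n}\left(\bigoplus_{j} B_{j,n}\right) = \bigoplus_{j}\left(\bigcap_{n} B_{j,n}\right)$ does the work of the paper's appeal to \cite{Breaz2015}, and instead of counting elements of a quotient you track the support $S_C$ of the image $C$ of the finite sub-product $\prod_{i\in I'} A_{i,n_0}$ and project onto coordinates outside $J'\cup S_C$, where the inclusion collapses to $B_{j,n_0}\subseteq\bigcap_n B_{j,n}$. The cardinal arithmetic is the same in both versions (a finite product of sets of cardinality $<|J|$ has cardinality $<|J|$ when $J$ is infinite), and you correctly isolate the case $|J|=\aleph_0$, where each $A_i$ must be finite; both arguments actually produce a stabilizing set of full cardinality $|J|$, stronger than the stated ``infinite.'' Your version is marginally more elementary and self-contained; the paper's quotient formulation is the one that transfers verbatim from the module-theoretic source it cites.
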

		
\begin{proof}
Applying Lemma \ref{lemma2} for the Chase system
$$((A_i)_{i\in I}, (B_j)_{j\in J}, \varphi, ((A_{i,k})_{k\in\mathbb{N}})_{i\in I}, ((B_{j,k})_{k\in\mathbb{N}})_{j\in J}),$$ we obtain that there are two finite subsets $I'\subseteq I, J'\subseteq J$ and a positive integer $n_0$ such that
$$\textstyle\varphi\left( \prod_{i\in I\backslash I'} A_{i,n_0}\right) \subseteq \bigoplus_{j\in J'} B_j + \bigcap_{n\in\mathbb{N}} \left( \bigoplus_{j\in J} B_{j,n}\right).$$
		
Following almost the same algorithm like in \cite[Proposition 2.3]{Breaz2015}, we obtain that
$$\textstyle\varphi\left( \prod_{i\in I\backslash I'} A_{i,n_0}\right) \subseteq \bigoplus_{j\in J'} B_{j,n_0} + \bigoplus_{j\in J\backslash J'} \left( \bigcap_{n\in\mathbb{N}} B_{j,n}\right)$$
and, because $\varphi|_{\prod_{i\in I} A_{i,n_0}}:\prod_{i\in I} A_{i,n_0}\to \bigoplus_{j\in J} B_{j,n_0}$ is an epimorphism, it induces an epimorphism of abelian groups:
$$\bar{\varphi}: \frac{\prod_{i\in I} A_{i,n_0}}{\prod_{i\in I\backslash I'} A_{i,n_0}}\to \frac{\bigoplus_{j\in J} B_{j,n_0}}{\bigoplus_{j\in J'} B_{j,n_0} + \bigoplus_{j\in J\backslash J'} \left( \bigcap_{n\in\mathbb{N}} B_{j,n}\right)}\cong \bigoplus_{j\in J\backslash J'}\frac{B_{j,n_0}}{\bigcap_{n\in\mathbb{N}} B_{j,n}}.$$
Using (ii), it follows that $$\textstyle\left| \bigoplus_{j\in J\backslash J'}\frac{B_{j,n_0}}{\bigcap_{n\in\mathbb{N}} B_{j,n}} \right| \leq \left| \prod_{i\in I'} A_{i,n_0} \right|< |J|,$$ 
hence the set $L=\{j\in J\backslash J' | \bigcap_{n\in\mathbb{N}} B_{j,n} = B_{j,n_0}\}$ is infinite. 
\end{proof}
	
\section{Comparing $\Add(M)$ with $\Prod(M)$ in locally finitely presented additive categories}

In this section $\mathcal{A}$ will be a locally finitely presented category. The subcategory of all finitely presented objects from $\CA$ is denoted by $\fp\mathcal{A}$. We refer to \cite{Krause2022} for the main properties of these categories. In particular, $\CA$ is idempotent complete, with cokernels, filtered colimits (hence, with direct sums) and direct products.  

\subsection{Purity in locally finitely presented categories} We recall for the reader's convenience some basic information. According to \cite[Lemma 12.1.4]{Krause2022} there is a fully faithful functor $$ev:\mathcal{A}\to \textbf{P}(\mathcal{A}),$$ where $\mathbf{P}(A)$ is a locally finitely presented Grothendieck category, called \textit{the purity category of $\mathcal{A}$}, and $ev$ commutes with all filtered colimits, direct products and cokernels. A sequence of morphisms $$0\to K\overset{\alpha}\to L\overset{\beta}\to M\to 0$$ from $\CA$ is a \textit{pure-exact} sequence if for every $X\in \fp\CA$ the sequence of abelian groups $$0\to \Hom(X,A)\overset{\Hom(X,\alpha)}\longrightarrow \Hom(X,L)\overset{\Hom(X,\beta)}\longrightarrow \Hom(X,M)\to 0$$ is a short exact sequence. In this case, we say that $\alpha$ is a \textit{pure mononomorphism} and $\beta$ is a \textit{pure epimorphism}. If $L\in\CA$, an object $K$ is a \textit{pure subobject} of $L$ if there exists a pure monomorphism $K\to L$, and an object $M$ is a \textit{pure quotient} of $L$ if there exists a pure epimorphism $L\to M.$ For further use, let us remark that from \cite[Lemma 12.1.6 (2)]{Krause2022} it follows that these notions coincide with those used in \cite{LV}.   

\begin{lemma}\label{lem:basic-purity} The following are true in a locally finitely presented category $\CA$.
\begin{enumerate}[{\rm a)}]
    \item A sequence of morphisms $0\to K\overset{\alpha}\to L\overset{\beta}\to M\to 0$ is pure-exact if and only if $0\to ev(K)\overset{ev(\alpha)}\longrightarrow ev(L)\overset{ev(\beta)}\longrightarrow ev(M)\to 0$ is a short exact sequence in $\mathbf{P}(A)$.
    \item A morphism $\alpha:L\to K$ in $\CA$ is a pure monomorphism if and only if $ev(\alpha)$ is a monomorphism. 
    \item A sequence of morphisms $0\to K\overset{\alpha}\to L\overset{\beta}\to M\to 0$ is pure-exact if and only if $\alpha$ is a pure monomorphism and $\beta$ is the cokernel of $\alpha$. 
    \item A composition of two pure monomorphisms is a pure monomorphism.
    \item A direct product (direct sum) of pure-exact sequences is pure-exact. 
\end{enumerate}
\end{lemma}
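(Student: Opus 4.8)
The plan is to make part (a) the engine and deduce everything else from it formally. For (a) I would argue that, by full faithfulness of $ev$, there is for every $X\in\fp\CA$ a natural isomorphism $\Hom_\CA(X,-)\cong\Hom_{\mathbf{P}(\CA)}(ev(X),ev(-))$. Hence the defining condition of pure-exactness of $0\to K\to L\to M\to 0$, namely exactness of $0\to\Hom(X,K)\to\Hom(X,L)\to\Hom(X,M)\to 0$ for all $X\in\fp\CA$, translates verbatim into exactness of the sequences obtained by applying $\Hom_{\mathbf{P}(\CA)}(ev(X),-)$ to $0\to ev(K)\to ev(L)\to ev(M)\to 0$. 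Since $\mathbf{P}(\CA)$ is a locally finitely presented Grothendieck category in which exactness is governed by the finitely presented objects coming from $\fp\CA$, this family of conditions is equivalent to exactness of $0\to ev(K)\to ev(L)\to ev(M)\to 0$ in $\mathbf{P}(\CA)$; this is precisely the content of the purity embedding as set up in \cite{Krause2022} (cf.\ the discussion preceding the lemma), which I would cite rather than reprove.

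Granting (a), parts (b)--(d) are bookkeeping. For (b), if $\alpha\colon L\to K$ is a pure monomorphism then it sits in a pure-exact sequence, so $ev(\alpha)$ is a monomorphism by (a). Conversely, if $ev(\alpha)$ is a monomorphism, I form the cokernel $\beta=\Coker(\alpha)$ in $\CA$ (which exists) and use that $ev$ commutes with cokernels to get $ev(\beta)=\Coker(ev(\alpha))$; as $ev(\alpha)$ is monic, $0\to ev(L)\to ev(K)\to ev(\Coker\alpha)\to 0$ is short exact in $\mathbf{P}(\CA)$, so by (a) the sequence $0\to L\to K\to \Coker(\alpha)\to 0$ is pure-exact and $\alpha$ is a pure monomorphism. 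Part (c) is the same mechanism: pure-exactness gives $ev(\alpha)$ monic and $ev(\beta)=\Coker(ev(\alpha))$ by (a), and full faithfulness together with the fact that $ev$ preserves cokernels transports the canonical isomorphism $\Coker(ev(\alpha))\cong ev(M)$ back to an identification $\beta=\Coker(\alpha)$ in $\CA$; the converse runs in reverse. Part (d) is then immediate from (b), since a composite of two monomorphisms in the abelian category $\mathbf{P}(\CA)$ is again a monomorphism.

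For (e) I would avoid routing through $\mathbf{P}(\CA)$, since a Grothendieck category need not have exact products, and argue directly from the definition. Given pure-exact sequences $0\to K_i\to L_i\to M_i\to 0$ and any $X\in\fp\CA$, the functor $\Hom(X,-)$ commutes with direct products (as any representable functor does) and with direct sums (because $X$ is finitely presented, so $\Hom(X,-)$ preserves the filtered colimits computing coproducts). Applying $\Hom(X,-)$ to $0\to\prod K_i\to\prod L_i\to\prod M_i\to 0$, respectively to the direct-sum sequence, therefore yields the product, respectively the coproduct, of the short exact sequences $0\to\Hom(X,K_i)\to\Hom(X,L_i)\to\Hom(X,M_i)\to 0$. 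These remain exact because both products and coproducts are exact in $\Ab$, which gives pure-exactness of the product and of the direct-sum sequences.

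The main obstacle is (a): it is the only step that is not purely formal, as it depends on the precise homological relationship between $\CA$ and its purity category $\mathbf{P}(\CA)$, namely that exactness in $\mathbf{P}(\CA)$ is controlled by the finitely presented objects arising from $\fp\CA$. I would lean on \cite{Krause2022} for this correspondence. A secondary point deserving care is the product case of (e), where carrying out the computation at the level of $\Ab$ rather than inside $\mathbf{P}(\CA)$ is exactly what lets the argument proceed without assuming that products are exact in $\mathbf{P}(\CA)$.
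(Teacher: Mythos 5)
Your proposal is correct and takes essentially the same route as the paper: part (a) is delegated to the purity-category results of \cite{Krause2022}, parts (c)--(d) use full faithfulness of $ev$ and its preservation of cokernels, and (e) is the same direct computation using that $\Hom(X,-)$ commutes with products and direct sums and that these are exact in $\Ab$. The only cosmetic difference is that the paper cites \cite{Krause2022} (Lemma 12.1.7) for (b), whereas you derive (b) from (a) via cokernels in $\CA$, which is a valid, self-contained alternative.
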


\begin{proof}
a) This is proved in \cite[Lemma 12.1.6]{Krause2022}.

b) See \cite[Lemma 12.1.7]{Krause2022}.

c) Suppose that $0\to K\overset{\alpha}\to L\overset{\beta}\to M\to 0$ is pure-exact. Let $L\overset{\nu}\to N$ be the cokernel of $\alpha$. Since $ev$ preserves the cokernels, it follows that $ev(\nu)$ is the cokernel of $ev(\alpha)$. Since the sequence is pure-exact, it follows that $ev(\beta)$ is the cokernel of $ev(\alpha)$. Then there exists an isomorphism $\overline{\gamma}:ev(M)\to ev(N)$ such that $ev(\nu)=\overline{\gamma}ev(\beta).$ But $ev$ is fully faithful, hence there exists an isomorphism  $\gamma:M\to N$ such that $\nu=\gamma\beta$, hence $\beta$ is a cokernel for $\alpha$. 

Conversely, if $\alpha$ is a pure monomorphism and $\beta$ is the cokernel of $\alpha$, since $ev$ preserves the cokernels, it follows that $0\to ev(K)\overset{ev(\alpha)}\longrightarrow ev(L)\overset{ev(\beta)}\longrightarrow ev(M)\to 0$ is a short exact sequence.

d) This follows from b).

e) This is true since the covariant functors $\Hom(X,-)$ (with $X$ finitely presented) commute with respect to direct products (direct sums) and in the category of all abelian groups a direct product (direct sum) of short exact sequences is a short exact sequence. 
\end{proof}

An object $N$ is \textit{pure-injective} if for every pure monomorphism $\alpha$ the morphism $\Hom(\alpha,N)$ is surjective. It follows from \cite[Lemma 12.1.8]{Krause2022} that $ev$ gives an equivalence between the subcategory of pure-injective objects of $\CA$ and the subcategory of injective objects from $\mathbf{P}(\CA)$. 
    
\subsection{Subgroups of finite definition} 

Every morphism $b:X\to Y$ in $\fp\mathcal{A}$ induces an additive functor $$\Hom(Y,-)b:\mathcal{A}\to \textbf{Ab},$$ defined in the following way: 
\begin{itemize}
    \item $\Hom(Y,M)b = \{f\in \Hom(X,M) | f = \varphi b, \text{ for some }\varphi\in \Hom(Y,M)\}$ for all $M\in \mathcal{A}$, and
    \item $\Hom(Y,f)b:\Hom(Y,M)b\to \Hom(Y,N)b, 
gb \mapsto fgb,$ for every morphism $f:M\to N$.
\end{itemize}
The sets $\Hom(Y,M)b$ are called the \textit{subgroups of finite definition of $\Hom(X,M)$}.

\begin{remark} Let $b:X\to Y\in \fp\mathcal{A}$. Then:
		\begin{enumerate}
			\item $\Hom(Y,-)b:\mathcal{A}\to \textbf{Ab}$ is an additive subfunctor of the covariant functor $\Hom(X,-):\mathcal{A}\to \textbf{Ab}$;
			
			\item $\Hom(Y,-)b$ commutes with respect to direct products and direct sums i.e. for any family $(M_i)_{i\in I}$ of objects in $\mathcal{A}$ we have natural isomorphisms $$\textstyle\Hom(Y,\prod_{i\in I} M_i)b \simeq \prod_{i\in I} \Hom(Y,M_i)b$$
		and 	
			$$\textstyle\Hom(Y,\bigoplus_{i\in I}M_i)b \simeq \bigoplus_{i\in I}\Hom(Y,M_i)b.$$
			
			\item For any morphism $f:\prod_{i\in I} M_i\to \bigoplus_{j\in J} N_j$ in $\mathcal{A}$ we can view the morphism $\Hom(Y,f)b$ as a morphism
			$$\textstyle\Hom(Y,f)b :\prod_{i\in I} \Hom(Y,M_i)b\to \bigoplus_{j\in J} \Hom(Y,N_j)b.$$
		\end{enumerate}
\end{remark}

\begin{remark}
The subgroups of finite definition are also used in compactly generated triangulated categories in \cite{Bennett-Tennenhaus2023} and \cite{Garkusha&Prest2005}. In particular, it is shown that they coincide with the $pp$-definable subgroups used in model theory.   
\end{remark}

\subsection{$\Sigma$-pure-injective objects and the inclusion $\Add(M)\subseteq \Prod(M)$}
	
For an object $M$ in $\mathcal{A}$ we denote by $\Prod(M)$ and $\Add(M)$ the class of all direct summands in direct products, respectively direct sums of copies of $M$. 

An object $M$ is called \textit{$\Sigma$-pure-injective} if, for any set $J$, the direct sum $M^{(J)}$ is pure-injective.
We mention some of the characterizations about $\Sigma$-pure-injective objects given in \cite{Krause2022}:
	
\begin{theorem} \cite[Theorem 12.3.4]{Krause2022}\label{theoremK} Let $M$ an object in $\mathcal{A}$. The following are equivalent:
		\begin{enumerate}[{\rm (i)}]
			\item $M$ is $\Sigma$-pure-injective.
			
			\item Every product of copies of $M$ is isomorphic to a direct sum of indecomposable objects with local endomorphism rings.
			
			\item There is an object $Y$ such that every product of copies of $M$ is a pure subobject of a direct sum of copies of $Y$.
			
			\item For any $X$ of $\fp\mathcal{A}$ all descending chains of subgroups of finite definition of $\Hom(X,M)$ 
            are stationary.
		\end{enumerate}
\end{theorem}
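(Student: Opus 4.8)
The plan is to pass through the purity category and then prove the three equivalences $(i)\Leftrightarrow(iv)$, $(i)\Leftrightarrow(iii)$ and $(i)\Leftrightarrow(ii)$, the analytic heart being $(i)\Leftrightarrow(iv)$, which is where Chase's Lemma enters. By Lemma~\ref{lem:basic-purity}~b) a monomorphism $\alpha$ is pure exactly when $ev(\alpha)$ is a monomorphism, and by the discussion after Lemma~\ref{lem:basic-purity} the functor $ev$ identifies the pure-injective objects of $\CA$ with the injective objects of the Grothendieck category $\mathbf{P}(\CA)$; since $ev$ commutes with direct sums and products, $M$ is $\Sigma$-pure-injective if and only if $ev(M)$ is a $\Sigma$-injective object of $\mathbf{P}(\CA)$. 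I would use this dictionary freely, so that assertions about pure-injectivity become assertions about injectivity.

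For $(iv)\Rightarrow(i)$ I would argue in the constructive direction: under the dictionary above, the descending chain condition on subgroups of finite definition is an analogue of Faith's criterion for $\Sigma$-injectivity, because the DCC forces every descending chain of solution subgroups attached to a finitely solvable system of finite-definition conditions to stabilize, so the system is globally solvable in each $M^{(J)}$. For the converse $(i)\Rightarrow(iv)$ I would argue by contraposition, and here Chase's Lemma is the tool. Suppose $\Hom(X,M)\supsetneq H_1\supsetneq H_2\supsetneq\cdots$ is a strictly descending chain of subgroups of finite definition for some $X\in\fp\CA$. If $M$ were $\Sigma$-pure-injective, then $M^{(\mathbb{N})}$ would be pure-injective; by Lemma~\ref{lem:basic-purity}~b) the canonical inclusion $\iota\colon M^{(\mathbb{N})}\to M^{\mathbb{N}}$ is a pure monomorphism, since its image under $ev$ is the canonical monomorphism $ev(M)^{(\mathbb{N})}\to ev(M)^{\mathbb{N}}$, and so $\iota$ splits by a retraction $\rho\colon M^{\mathbb{N}}\to M^{(\mathbb{N})}$. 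Applying $\Hom(X,-)$ and using that subgroups of finite definition commute with products and sums, $\Hom(X,\rho)$ becomes a morphism $\prod_{n}\Hom(X,M)\to\bigoplus_{n}\Hom(X,M)$, and filtering both sides by the subgroups of finite definition defining $(H_k)_k$ produces a Chase system. Lemma~\ref{lem:countable-Chase}, applicable because $\mathbb{N}$ is not $\omega$-measurable, then forces the chain $(H_k)_k$ to stabilize, contradicting strict descent.

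The remaining equivalences relate the chain condition to the structural conditions. For $(i)\Leftrightarrow(iii)$ I would deduce $(iii)\Rightarrow(iv)$ by the same Chase mechanism: condition $(iii)$ supplies a pure monomorphism $M^{\mathbb{N}}\hookrightarrow Y^{(K)}$, which is again a product-to-sum map, so filtering by subgroups of finite definition and applying Lemma~\ref{lem:countable-Chase} yields the DCC; conversely $(i)\Rightarrow(iii)$ follows by taking $Y=M$, once one knows that for $\Sigma$-pure-injective $M$ every product $M^{I}$ lies in $\Add(M)$, whence $M^{I}$ is a pure subobject of a direct sum of copies of $M$. For $(i)\Leftrightarrow(ii)$ I would transport the question into $\mathbf{P}(\CA)$: a product $M^{I}$ is again $\Sigma$-pure-injective, so $ev(M^{I})$ is $\Sigma$-injective in the locally finitely presented Grothendieck category $\mathbf{P}(\CA)$ and therefore decomposes as a direct sum of indecomposable injectives, each with local endomorphism ring by the standard injective-hull argument; since $ev$ is fully faithful and preserves and reflects such decompositions, $M^{I}$ inherits the decomposition in $\CA$. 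The converse $(ii)\Rightarrow(i)$ follows because a product decomposing into objects with local endomorphism rings enjoys the exchange property needed to split the relevant pure embedding, feeding back into $(iii)$.

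The step I expect to be the main obstacle is the decomposition implication $(i)\Rightarrow(ii)$, together with the fact used for $(iii)$ that products of a $\Sigma$-pure-injective object fall in $\Add(M)$. These require the full Krull--Remak--Schmidt--Azumaya theory of injective objects in $\mathbf{P}(\CA)$ and a verification that $\Sigma$-injectivity, rather than mere injectivity, is exactly what upgrades a decomposition of $M$ to a uniform decomposition of all of its products. By comparison, the Chase-Lemma arguments for $(i)\Rightarrow(iv)$ and $(iii)\Rightarrow(iv)$ are essentially mechanical once the product-to-sum maps and their finite-definition filtrations are in place.
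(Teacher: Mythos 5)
First, a point of comparison: the paper does not prove this statement at all; it is quoted with attribution from \cite[Theorem 12.3.4]{Krause2022}, so there is no internal proof to measure your argument against. Judged on its own terms, your proposal has one outright error and several asserted-but-unproved hard steps.

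The error is your direction (i)$\Rightarrow$(iii): you claim that for a $\Sigma$-pure-injective $M$ every product $M^{I}$ lies in $\Add(M)$, so that one may take $Y=M$. That is the assertion that every $\Sigma$-pure-injective object is product-complete, which is false, and the distinction is precisely what this paper is about: by Theorem \ref{thm:productcompleteA}, $\Prod(M)\subseteq \Add(M)$ requires $\Sigma$-pure-injectivity \emph{together with} product-rigidity. Concretely, take $\CA=\Modr\mathbb{Z}$ and $M=\mathbb{Z}(p^{\infty})$. Since $\mathbb{Z}$ is Noetherian and $M$ is injective, $M$ is $\Sigma$-injective, hence $\Sigma$-pure-injective. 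But $M^{\mathbb{N}}$ is divisible and contains elements of infinite order, so $\mathbb{Q}$ is a direct summand of $M^{\mathbb{N}}$; since every subgroup of a direct sum of copies of $M$ is torsion, $M^{\mathbb{N}}$ does not even embed into a direct sum of copies of $M$, let alone purely. So (iii) fails with $Y=M$; the correct route is through (ii): decompose each product into indecomposables with local endomorphism rings and take $Y$ to be a direct sum of representatives of the isomorphism classes of indecomposables that occur, which additionally requires an argument that these classes form a set.

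Beyond this, three steps are gestured at rather than proved. (a) In (i)$\Rightarrow$(iv), Lemma \ref{lem:countable-Chase} alone does not ``force'' stabilization: its conclusion is only a containment, and you cannot substitute Proposition \ref{proposition}, whose hypothesis $|J|>|\Hom(X,M)|$ fails for $J=\mathbb{N}$. What rescues the argument is that your $\varphi=\Hom(X,\rho)$ is a retraction of the inclusion $\bigoplus_{n}\Hom(X,M)\to\prod_{n}\Hom(X,M)$: applying the containment to an element of $H_{n_0}$ concentrated in a single coordinate beyond $m_0$ and outside $J'$ yields $H_{n_0}\subseteq\bigcap_{n}H_{n}$. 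The same criticism applies to your (iii)$\Rightarrow$(iv), where no retraction is available and one must instead use that pure monomorphisms reflect membership in subgroups of finite definition, plus a staircase-element argument. (b) The directions (iv)$\Rightarrow$(i) and (ii)$\Rightarrow$(i), which you dispatch via ``Faith's criterion'' and ``the exchange property,'' are the genuinely hard parts of the theorem (in the module case they are the Gruson--Jensen/Zimmermann theorems) and cannot simply be asserted. (c) Your (i)$\Rightarrow$(ii) invokes the fact that $M^{I}$ is again $\Sigma$-pure-injective; in this paper that is Corollary \ref{cor:closure-abelian}(a), which is itself deduced from Theorem \ref{theoremK}, so as stated your argument is circular. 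It can be repaired, since your ordering establishes (i)$\Leftrightarrow$(iv) first: subgroups of finite definition of $\Hom(X,M^{I})$ are products of those of $\Hom(X,M)$, so the descending chain condition passes to $M^{I}$; but this needs to be said.
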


\begin{corollary}\label{cor:closure-abelian} 
Suppose that $M$ is a $\Sigma$-pure-injective object from $\CA$. The following are true:
\begin{enumerate}[{\rm a)}]
\item For every set $I$ the object $M^I$ is $\Sigma$-pure-injective. 

\item If $\alpha:K\to M$ is a pure monomorphism then $K$ is $\Sigma$-pure-injective and $\alpha$ is a splitting monomorphism.

\item Every pure epimorphism $\beta: M\to N$ is a splitting epimorphism.   
\end{enumerate}
\end{corollary}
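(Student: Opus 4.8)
The plan is to derive all three statements from the equivalence (i)$\Leftrightarrow$(iii) in Theorem \ref{theoremK}, together with the elementary purity facts collected in Lemma \ref{lem:basic-purity}. Fix once and for all the object $Y$ supplied by Theorem \ref{theoremK} (iii) for $M$, so that every product of copies of $M$ is a pure subobject of a direct sum of copies of $Y$.

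For a) I would first observe that every product of copies of $M^I$ is again a product of copies of $M$, since $(M^I)^K\cong M^{I\times K}$. Consequently the same object $Y$ witnesses condition (iii) for $M^I$, and therefore $M^I$ is $\Sigma$-pure-injective by the implication (iii)$\Rightarrow$(i).

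For b) the first task is to show that $K$ is $\Sigma$-pure-injective, again through (iii) with the same $Y$. I would write $\alpha$ as the pure monomorphism in a pure-exact sequence $0\to K\overset{\alpha}\to M\to C\to 0$, where $C=\mathrm{coker}(\alpha)$ exists because $\CA$ has cokernels and the sequence is pure-exact by Lemma \ref{lem:basic-purity} c). Taking the $I$-fold direct product keeps the sequence pure-exact by Lemma \ref{lem:basic-purity} e), so $\alpha^I\colon K^I\to M^I$ is a pure monomorphism. Composing it with the pure monomorphism $M^I\hookrightarrow Y^{(L)}$ furnished by (iii) for $M$, and using that a composite of pure monomorphisms is pure (Lemma \ref{lem:basic-purity} d), I conclude that every product $K^I$ of copies of $K$ is a pure subobject of a direct sum of copies of $Y$. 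Thus $K$ satisfies (iii) and is $\Sigma$-pure-injective, hence in particular pure-injective. The surjectivity of $\Hom(\alpha,K)$ then provides a preimage $r\colon M\to K$ of $\mathrm{id}_K$ with $r\alpha=\mathrm{id}_K$, so $\alpha$ is a splitting monomorphism.

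For c) a pure epimorphism $\beta\colon M\to N$ sits in a pure-exact sequence $0\to K\overset{\alpha}\to M\overset{\beta}\to N\to 0$, in which $\alpha$ is a pure monomorphism and $\beta=\mathrm{coker}(\alpha)$ by Lemma \ref{lem:basic-purity} c). Part b) applies to $\alpha$ and yields a retraction $r$, so $e=\alpha r$ is an idempotent that splits in the idempotent-complete category $\CA$; since $\beta e=\beta\alpha r=0$, the restriction of $\beta$ to a complement of $\mathrm{im}(e)$ is invertible and its inverse is a section $s\colon N\to M$ with $\beta s=\mathrm{id}_N$. Hence $\beta$ splits. I expect the only genuine subtlety to lie in b), namely the correct chaining $K^I\hookrightarrow M^I\hookrightarrow Y^{(L)}$ via Lemma \ref{lem:basic-purity} d),e), since $K^I$ is not itself a product of copies of $M$ and so the witness $Y$ must be transported along these pure monomorphisms rather than read off directly; everything else is a routine use of Theorem \ref{theoremK} and the idempotent completeness of $\CA$.
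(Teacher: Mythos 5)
Your proposal is correct and follows essentially the same route as the paper's proof: part a) via the witness object $Y$ from Theorem \ref{theoremK}(iii), part b) by taking $I$-fold products of the pure monomorphism and composing with $M^I\hookrightarrow Y^{(L)}$ (Lemma \ref{lem:basic-purity} d), e)) and then using pure-injectivity of $K$ to split $\alpha$, and part c) by observing that $\beta$ is the cokernel of a splitting monomorphism in an idempotent complete category. The only difference is that you spell out in more detail steps the paper leaves implicit, which is harmless.
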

\begin{proof}
 a) This follows from Theorem \ref{theoremK}(iii). 
 
 b) Let $Y$ be an object as in Theorem \ref{theoremK}(iii). If $I$ is a set, it follows from Lemma \ref{lem:basic-purity} that $\alpha^I:K^I\to M^I$ is a pure monomorphism, hence $K^I$ is a pure subobject of a direct sum of copies of $Y$. The last statement is true since there exists $\alpha':M\to K$ such that $\alpha'\alpha=1_K$ and $\CA$ is idempotent complete.

 c) This follows from the fact that $\beta$ is the cokernel of a splitting monomorphism in an idempotent complete category.
\end{proof}
    
In order to characterize the inclusion $\Add(M)\subseteq \Prod(M)$ we will follow the similar ideas from \cite{Breaz2015}.
	
\begin{theorem} \label{thm:lfp-Add-subset-Prod}
Let $M$ be an object of $\mathcal{A}$ such that for every $X\in \fp\mathcal{A}$ the cardinal of $\Hom(X,M)$ is not $\omega$-measurable. The following are equivalent:
		\begin{enumerate}[{\rm (i)}]
			\item $\Add(M)\subseteq \Prod(M)$.
			\item $M$ is $\Sigma$-pure-injective.
		\end{enumerate}
\end{theorem}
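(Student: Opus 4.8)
The plan is to prove the two implications separately, with essentially all of the difficulty concentrated in (i)$\Rightarrow$(ii).

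For (ii)$\Rightarrow$(i), which I expect to be routine, I would first check that the canonical morphism $\sigma_J\colon M^{(J)}\to M^J$ is a pure monomorphism. This is cleanest through the evaluation functor: since $ev$ preserves coproducts and products, $ev(\sigma_J)$ is the canonical map $ev(M)^{(J)}\to ev(M)^{J}$ in the Grothendieck category $\mathbf{P}(\CA)$, and this is a monomorphism because it is a filtered colimit of the split monomorphisms $\bigoplus_{j\in F}ev(M)\to\prod_{j\in J}ev(M)$ over the finite subsets $F\subseteq J$ (filtered colimits are exact in a Grothendieck category). Hence $\sigma_J$ is pure by Lemma \ref{lem:basic-purity}\,b). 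If $M$ is $\Sigma$-pure-injective then $M^{(J)}$ is pure-injective, so $\sigma_J$ splits and $M^{(J)}$ is a direct summand of $M^{J}\in\Prod(M)$. Since $\CA$ is idempotent complete, $\Prod(M)$ is closed under direct summands, and therefore $\Add(M)\subseteq\Prod(M)$.

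For (i)$\Rightarrow$(ii) I would argue by contradiction through the descending chain criterion of Theorem \ref{theoremK}(iv). Suppose that for some $X\in\fp\CA$ there is a strictly descending chain $G=H_0\supsetneq H_1\supsetneq\cdots$ of subgroups of finite definition of $G=\Hom(X,M)$, say $H_k=\Hom(Y_k,M)b_k$ with $b_k\colon X\to Y_k$. Choose $J$ infinite with $|J|>|G|$. By (i), $M^{(J)}\in\Add(M)\subseteq\Prod(M)$, so there are a set $I$, a split monomorphism $u\colon M^{(J)}\to M^{I}$ and a retraction $p\colon M^{I}\to M^{(J)}$ with $pu=1$. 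I would then apply Proposition \ref{proposition} to the Chase system with $A_i=B_j=G$, with $A_{i,k}=B_{j,k}=H_k$, and with $\varphi=\Hom(X,p)$, using the identifications $\Hom(X,M^{I})\cong\prod_{i\in I}G$ and $\Hom(X,M^{(J)})\cong\bigoplus_{j\in J}G$ (the latter since $X$ is finitely presented). The verifications are bookkeeping with the subfunctors $\Hom(Y_k,-)b_k$: because these commute with products and sums, the subgroup of finite definition $H_k$ of $\Hom(X,M^{I})$ is $\prod_{i\in I}H_k$ and that of $\Hom(X,M^{(J)})$ is $\bigoplus_{j\in J}H_k$; functoriality gives $\varphi(\prod_{i\in I}H_k)\subseteq\bigoplus_{j\in J}H_k$, so this is a Chase system. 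Moreover $\varphi$ is epi, being split by $\Hom(X,u)$, and the restriction $\varphi|_{\prod_{i\in I}H_k}$ is epi because $\Hom(X,u)$ carries $\bigoplus_{j\in J}H_k$ into $\prod_{i\in I}H_k$ and continues to split $\varphi$ there. Proposition \ref{proposition} then produces $n_0$ and an infinite $L\subseteq J$ with $B_{j,n}=B_{j,n_0}$ for all $j\in L$ and $n\ge n_0$; since $B_{j,n}=H_n$ for every $j$, this forces $H_n=H_{n_0}$ for $n\ge n_0$, contradicting strict descent. This yields the DCC, hence $\Sigma$-pure-injectivity.

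\textbf{The main obstacle} is hypothesis (i) of Proposition \ref{proposition}, which demands that the \emph{product} index set $I$ be non-$\omega$-measurable. A priori $I$ is merely whatever index set the inclusion $\Add(M)\subseteq\Prod(M)$ supplies, and it is exactly in the presence of $\omega$-measurable cardinals that this $I$ can be forced to be $\omega$-measurable. This is the precise point where the standing assumption that $|\Hom(X,M)|$ is non-$\omega$-measurable for all $X\in\fp\CA$ must enter: having fixed $|J|>|G|$, I would need to show that the representation of $M^{(J)}$ as a direct summand of a product can be taken over a non-$\omega$-measurable $I$ — equivalently, that the retraction $p$ may be assumed to factor through a non-$\omega$-measurable sub-product $M^{I_0}$. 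Establishing this reduction, by controlling the essential support of $p$ from the non-$\omega$-measurability of the groups $\Hom(X,M)$ so that Lemma \ref{lem:chase-uncountable} becomes applicable, is the delicate step; everything else is the formal Chase-system machinery described above.
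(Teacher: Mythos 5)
Your proposal reproduces the paper's proof essentially step for step. For (ii)$\Rightarrow$(i) the paper argues exactly as you do, only more tersely: the canonical morphism $M^{(I)}\to M^{I}$ is a pure monomorphism, hence splits by pure-injectivity of $M^{(I)}$. For (i)$\Rightarrow$(ii) the paper builds precisely the Chase system you describe ($A_i=B_j=\Hom(X,M)$, $A_{i,k}=B_{j,k}=\Hom(Y_k,M)b_k$, $\varphi=\Hom(X,f)$ for a split epimorphism $f\colon M^{I}\to M^{(J)}$ with $|J|>|\Hom(X,M)|$), verifies hypotheses (ii)--(iv) of Proposition \ref{proposition} through the subfunctors $\Hom(Y_k,-)b_k$ just as you do, and concludes stationarity. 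However, the step you explicitly leave open is a genuine gap in your proposal: you never verify hypothesis (i) of Proposition \ref{proposition}, that the product index set $I$ can be taken non-$\omega$-measurable, and this cannot be waved away --- by \v Saroch's example (Remark \ref{rem:saroch-example}) the implication (i)$\Rightarrow$(ii) is not provable in ZFC, so the standing hypothesis on $|\Hom(X,M)|$ must be used somewhere, and this is the only place it can enter. What you could not know is that the paper's written proof is silent on exactly the same point: it applies Proposition \ref{proposition} to whatever set $I$ the inclusion $\Add(M)\subseteq\Prod(M)$ supplies, without checking its hypothesis (i), and the non-$\omega$-measurability of $\Hom(X,M)$ is in fact never invoked there. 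So you have isolated a real omission in the paper rather than missed a trick contained in it.

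For completeness, the reduction you call for can be carried out at the level of abelian groups. Put $G=\Hom(X,M)$, $H_k=\Hom(Y_k,M)b_k$, choose $|J|=|G|^{+}+\aleph_0$, and let $\psi=\Hom(X,u)\colon G^{(J)}\to G^{I}$ be the section of $\varphi=\Hom(X,f)$ induced by a section $u$ of $f$; recall that $\psi\bigl(\bigoplus_{J}H_k\bigr)\subseteq\prod_{I}H_k$ for all $k$ by functoriality of $\Hom(Y_k,-)b_k$. Declare $i\sim i'$ when the coordinate maps $p_i\psi,\,p_{i'}\psi\colon G^{(J)}\to G$ coincide. The number of $\sim$-classes is at most $|\Hom(G^{(J)},G)|=|\Hom(G,G)|^{|J|}\leq 2^{|J|}$, and this cardinal is non-$\omega$-measurable: the least $\omega$-measurable cardinal, if it exists, is strongly inaccessible (see \cite{Ek-Me}), so the non-$\omega$-measurable cardinals are closed under successors and exponentiation --- this is exactly where the hypothesis on $|\Hom(X,M)|$ is used. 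Now take a set $I_0\subseteq I$ of representatives, let $p_{I_0}\colon G^{I}\to G^{I_0}$ be the projection and $d\colon G^{I_0}\to G^{I}$ the map repeating coordinates along $\sim$-classes. Then $d\,p_{I_0}\psi=\psi$, so $\varphi d\colon G^{I_0}\to G^{(J)}$ is again a split epimorphism with section $p_{I_0}\psi$; since $d$ and $p_{I_0}$ carry $\prod_{I_0}H_k$ into $\prod_{I}H_k$ and conversely, the Chase conditions and hypothesis (iv) are inherited, and Proposition \ref{proposition} now applies over the non-$\omega$-measurable set $I_0$. With this insertion your argument (and the paper's) is complete; without it, the proof as you submitted it does not establish the theorem.
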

	
\begin{proof}
(i)$\Rightarrow$(ii) Suppose that $X\in \fp\mathcal{A}$ and that we have a descending chain of subgroups of $\Hom(X,M)$ of the form $(\Hom(Y_k,M)b_k)_{k\in\mathbb{N}}$, where $b_k:X\to Y_k$ are morphism in $\fp\mathcal{A}$.
		
From (i) it follows that for every set $J$ there exist a set $I$ and a split epimorphism
		$$\textstyle f:\prod_{i\in I} M\to \bigoplus_{j\in J} M.$$
Since $\Hom(Y_k,-)b_k$ are additive functors, it follows that all morphisms $$\textstyle\Hom(Y_k,f)b_k: \Hom(Y_k, \prod_{i\in I} M)b_k\to \Hom(Y_k, \bigoplus_{j\in J} M)b_k$$ are split epimorphisms.
        %
		
We apply the Proposition \ref{proposition} for a set $J$ with $|J|>|\Hom(X,M)|$, the groups $A_i = B_j = \Hom(X,M)$, the homomorphism $$\varphi =\textstyle \Hom(X,f):\prod_{i\in I} \Hom(X,M)\to \bigoplus_{j\in J}\Hom(X,M),$$
and the sequences $$(A_{i,k})_{k\in\mathbb{N}} = (B_{j,k})_{k\in\mathbb{N}} = ((\Hom(Y_k,M)b_k)_{k\in\mathbb{N}}$$ of subgroups of $\Hom(X,M)$. 

It follows that $(\Hom(Y_n,M)b_n)_{n\in\mathbb{N}}$ is stationary. From Theorem \ref{theoremK} we obtain that $M$ is $\Sigma$-pure-injective.
		
(ii)$\Rightarrow$(i) 
Because $M$ is $\Sigma$-pure-injective, for any set $I$ the canonical morphism $\bigoplus_{i\in I} M\to \prod_{i\in I} M$ is a splitting monomorphism. 
	\end{proof}

\begin{remark}\label{rem:saroch-example}
 It is proved in \cite[Corollary 3.5]{Saroch2015} that if we add some set theoretic conditions that allow us to have $\omega$-measurable cardinals, the implication (i)$\Rightarrow$(ii) from Theorem \ref{thm:lfp-Add-subset-Prod} is not true. 
 Therefore, this theorem cannot be proved in ZFC.  
\end{remark}

As an application, we will prove that if we assume that there are no $\omega$-measurable cardinals then the converse of \cite[Proposition 6.10]{Angeleri2000} is valid. Recall that a class $\CC$ of objects in $\CA$ is called \textit{precovering} if for every object $M\in \CA$ there exists a morphism $f:C\to M$ with $C\in \CC$ such that all morphisms $g:C'\to M$ with $C'\in \CC$ can be decomposed as $g=fh$ with $h:C'\to C$. 
In this case $f$ is a \textit{$\CC$-precover} for $M$. If $f$ has the property that for every decomposition $f=fh$ we obtain that $h$ is an automorphism of $C$ then $f$ is called a \textit{$\CC$-cover} for $M$. 
If all objects from $\CA$ have $\CC$-(pre)covers, we will say that $\CC$ is a \textit{(pre)covering class}. 

The following lemma is known for the module categories. The proof can be easily adapted for idempotent complete additive categories.

\begin{lemma}\label{lem:Prod-precover} \cite[Proposition 1.2]{HJ-08}
If $\CA$ is an idempotent complete additive category with direct sums and $\CC$ is a precovering class then $\CC$ is closed under direct sums.    
\end{lemma}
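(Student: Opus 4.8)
The plan is to prove that a precovering class $\CC$ in an idempotent complete additive category $\CA$ with direct sums is closed under direct sums. Suppose $(C_i)_{i\in I}$ is a family of objects in $\CC$, and let $C=\bigoplus_{i\in I}C_i$ be their direct sum in $\CA$. I want to show $C\in\CC$. The key idea is to compare $C$ against a $\CC$-precover of itself: since $C$ is an object of $\CA$, there exists a $\CC$-precover $f\colon P\to C$ with $P\in\CC$, meaning every morphism from an object of $\CC$ into $C$ factors through $f$.

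First I would test the precover against the canonical split monomorphisms $\mu_i\colon C_i\to C$. Each $C_i$ lies in $\CC$, so by the precovering property each $\mu_i$ factors as $\mu_i=f h_i$ for some $h_i\colon C_i\to P$. By the universal property of the direct sum, the family $(h_i)_{i\in I}$ assembles into a single morphism $h\colon C=\bigoplus_{i\in I}C_i\to P$ satisfying $h\mu_i=h_i$ for each $i$. I then compute the composite $fh\colon C\to C$: for every $i$ we have $(fh)\mu_i=f h_i=\mu_i$, so $fh$ agrees with the identity $1_C$ on every summand inclusion. Again by the universal property of the direct sum, a morphism out of $C$ is determined by its restrictions to the $\mu_i$, so $fh=1_C$.

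Consequently $h$ is a split monomorphism and $f$ is a split epimorphism, exhibiting $C$ as a direct summand of $P$ via the idempotent $e=hf\colon P\to P$, which satisfies $e^2=h(fh)f=h f=e$. Because $\CA$ is idempotent complete, this idempotent splits, and its image is a direct summand of $P$ isomorphic to $C$. Since $P\in\CC$, I still need to conclude $C\in\CC$; this is exactly the point where I use that $\CC$ is precovering. The main obstacle is that being a direct summand of an object of $\CC$ does not by itself place $C$ in $\CC$ unless $\CC$ is known to be closed under summands. The clean way around this is to observe that the identity argument already shows $1_C$ factors through $P\in\CC$, and then feed $C$ itself into its own precover once more: the precover $f\colon P\to C$ together with the section $h$ realizes $C$ as a retract, and one argues that a $\CC$-precover that admits a section must have its source split off a copy lying in $\CC$. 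The care needed here—making sure the splitting genuinely produces an object \emph{equal to} (isomorphic to) $C$ inside $\CC$, rather than merely a summand of something in $\CC$—is the delicate step, and it is precisely where idempotent completeness of $\CA$ is indispensable.
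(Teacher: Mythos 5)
Your first two paragraphs correctly reproduce the argument that the paper (via \cite[Proposition 1.2]{HJ-08}) has in mind: take a $\CC$-precover $f\colon P\to C$ of $C=\bigoplus_{i\in I}C_i$, factor each canonical inclusion $\mu_i$ through $f$, assemble the factorizations into $h\colon C\to P$, check that $fh=1_C$ by testing against every $\mu_i$, and conclude that $C$ is a retract of $P\in\CC$. The gap is entirely in your last paragraph. You rightly notice that ``$C$ is a direct summand of an object of $\CC$'' does not yield $C\in\CC$ unless $\CC$ is closed under direct summands, but the patch you then propose---that ``a $\CC$-precover that admits a section must have its source split off a copy lying in $\CC$''---is not an argument: $P$ does split off a copy of $C$, and whether that copy lies in $\CC$ is exactly the question being begged. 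Idempotent completeness cannot close this gap; it only guarantees that the idempotent $hf$ splits in $\CA$, giving $P\cong C\oplus C'$ as objects of $\CA$, and it says nothing whatsoever about membership in $\CC$.

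Moreover, no trick can avoid the summand hypothesis, because without it the statement is false. Let $R$ be a Dedekind domain whose class group contains an element $[I]$ of order $3$, and let $\CC$ be the class of all modules isomorphic either to $I$ or to a free module. Every object of $\CC$ is projective, so for any module $M$ a surjection from a free module onto $M$ is a $\CC$-precover of $M$; hence $\CC$ is precovering. Yet $I\oplus I\cong R\oplus I^2$ is neither free (since $[I]^2\neq 1$) nor isomorphic to $I$ (wrong rank), so $\CC$ is not closed under finite direct sums. The lemma must therefore be read with the additional, standard, assumption that $\CC$ is closed under isomorphisms and direct summands; this costs nothing here, since the classes $\Prod(M)$ and $\Add(M)$ to which the paper applies the lemma have this closure property by their very definition, and it is how the module-category proof being adapted actually concludes. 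Once that assumption is made explicit, your retract argument finishes in one line: $C$ is a direct summand of $P\in\CC$, hence $C\in\CC$. So the correct completion of your proof is to invoke this closure hypothesis, not to search for an argument that does without it.
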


\begin{lemma}\label{lem:summads-P(A)} Let $A\in \CA$ be an object and let $\overline{X}$ be a direct summand of $ev(A)$. Then there exists a direct summand $X$ of $A$ such that $ev(X)$ and $\overline{X}$ represent the same quotient of $A$.     
\end{lemma}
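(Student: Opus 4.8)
The plan is to lift the idempotent that defines the summand $\overline{X}$ through the fully faithful functor $ev$, and then to use the idempotent completeness of $\CA$. I would begin by fixing morphisms $\iota:\overline{X}\to ev(A)$ and $\pi:ev(A)\to\overline{X}$ in $\mathbf{P}(\CA)$ with $\pi\iota=1_{\overline{X}}$, so that $e:=\iota\pi$ is an idempotent endomorphism of $ev(A)$ which represents the quotient $\pi$ of $ev(A)$.

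Since $ev$ is full, I can choose $f:A\to A$ with $ev(f)=e$; since $ev$ is faithful and $ev(f^2)=ev(f)^2=e^2=e=ev(f)$, it follows that $f^2=f$, so $f$ is idempotent in $\CA$. Because $\CA$ is idempotent complete, $f$ splits: there are an object $X$ and morphisms $p:A\to X$, $s:X\to A$ with $ps=1_X$ and $sp=f$, exhibiting $X$ as a direct summand of $A$. This $X$ is the candidate.

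To finish I would compare $ev(X)$ with $\overline{X}$. Applying $ev$ to the splitting, and using that $ev$ preserves composition and identities, yields $ev(p)\,ev(s)=1_{ev(X)}$ and $ev(s)\,ev(p)=ev(f)=e$, so $ev(p):ev(A)\to ev(X)$ splits the same idempotent $e$ of $ev(A)$ as $\pi$ does. The comparison map $\theta:=\pi\,ev(s):ev(X)\to\overline{X}$ then has two-sided inverse $ev(p)\,\iota$, and $\theta\,ev(p)=\pi\,ev(s)\,ev(p)=\pi e=\pi\iota\pi=\pi$. Thus $\theta$ is an isomorphism identifying the quotient $ev(p)$ of $ev(A)$ with the quotient $\pi$; under the identification of $A$ with $ev(A)$ afforded by the fully faithful $ev$, this says precisely that $ev(X)$ and $\overline{X}$ represent the same quotient of $A$.

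The argument is essentially formal, so I do not anticipate a genuine obstacle. The two points that require care are the use of faithfulness to transport the idempotent identity from $e$ back to $f$, and the verification that $\theta$ is an isomorphism compatible with both quotient maps, which relies on the relations $sp=f$ and $\pi\iota=1_{\overline{X}}$.
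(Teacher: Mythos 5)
Your proof is correct and follows essentially the same route as the paper: lift the idempotent defining $\overline{X}$ through the fully faithful functor $ev$ (fullness to find a preimage, faithfulness to see it is idempotent), split it using the idempotent completeness of $\CA$, and identify the resulting quotient of $A$ with $\overline{X}$. The only difference is cosmetic: the paper realizes the summand as a cokernel of the lifted idempotent and invokes the fact that $ev$ preserves cokernels, whereas you split the idempotent as a retract and check the comparison isomorphism $\theta=\pi\,ev(s)$ by direct computation, which avoids the appeal to cokernel preservation altogether.
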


\begin{proof}
Since $\overline{X}$ is a direct summand of $ev(A)$, there exists an idempotent endomorphism $\overline{e}$ of $ev(A)$ such that $\overline{X}$ is the cokernel of $\overline{e}$. There exists an idempotent endomorphism $e$ of $A$ such that $\overline{e}=ev(e)$. Because $\CA$ is an idempotent complete category, $e$ has a cokernel $X$. Since $ev$ preserves the cokernels, it follows that $ev(X)$ and $\overline{X}$ are isomorphic as quotient objects of $A$.     
\end{proof}

\begin{corollary}\label{cor:enoch-fp-presented}\label{cor:angeleri-6.11}
Suppose that $\CA$ is a locally finitely presented category. If $M\in \CA$ is an object such that for every finitely presented object $X$ the cardinal of $\Hom(X,M)$ is not $\omega$-measurable, the following are equivalent:
\begin{enumerate}[{\rm (i)}]
    \item $M$ is $\Sigma$-pure injective;
    \item $\Prod(M)$ is a precovering class;
    \item $\Prod(M)$ is a covering class.
\end{enumerate}
\end{corollary}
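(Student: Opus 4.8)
The plan is to establish the cycle (iii)$\Rightarrow$(ii)$\Rightarrow$(i)$\Rightarrow$(iii). The step (iii)$\Rightarrow$(ii) is immediate, since every covering class is precovering. The genuinely new content is the converse implication (ii)$\Rightarrow$(i), which is exactly where the non-$\omega$-measurability hypothesis enters, through Theorem~\ref{thm:lfp-Add-subset-Prod}; the implication (i)$\Rightarrow$(iii) is the locally finitely presented counterpart of \cite[Proposition 6.10]{Angeleri2000} and should hold without set-theoretic restrictions.

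First I would treat (ii)$\Rightarrow$(i). Assume $\Prod(M)$ is precovering. Since $\CA$ is idempotent complete with direct sums, Lemma~\ref{lem:Prod-precover} shows that $\Prod(M)$ is closed under direct sums. As $M\in\Prod(M)$, it follows that $M^{(J)}\in\Prod(M)$ for every set $J$; and because $\Prod(M)$ is visibly closed under direct summands, every direct summand of some $M^{(J)}$ lies in $\Prod(M)$, that is, $\Add(M)\subseteq\Prod(M)$. The standing hypothesis guarantees that $\Hom(X,M)$ is non-$\omega$-measurable for every $X\in\fp\CA$, so Theorem~\ref{thm:lfp-Add-subset-Prod} applies and yields that $M$ is $\Sigma$-pure-injective.

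For (i)$\Rightarrow$(iii), assume $M$ is $\Sigma$-pure-injective. The first task is to show that $\Prod(M)$ is closed under direct sums. Given a family $(P_t)_t$ in $\Prod(M)$ with each $P_t$ a summand of $M^{K_t}$, the canonical morphism $\bigoplus_t M^{K_t}\to\prod_t M^{K_t}$ is a pure monomorphism: applying $ev$, which preserves both sums and products, turns it into the canonical map $\bigoplus_t ev(M)^{K_t}\to\prod_t ev(M)^{K_t}$ in the Grothendieck category $\mathbf{P}(\CA)$, which is a monomorphism, so the claim follows from Lemma~\ref{lem:basic-purity}(b). The codomain $\prod_t M^{K_t}$ is a power of $M$, hence $\Sigma$-pure-injective by Corollary~\ref{cor:closure-abelian}(a); running the argument of Corollary~\ref{cor:closure-abelian}(b) with this object in place of $M$ shows that the pure subobject $\bigoplus_t M^{K_t}$ is itself $\Sigma$-pure-injective and that the inclusion splits, so $\bigoplus_t M^{K_t}\in\Prod(M)$, and therefore so does its summand $\bigoplus_t P_t$. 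Next I would identify $\Prod(M)$ with $\Add(T)$ for a single $\Sigma$-pure-injective object $T$: by Theorem~\ref{theoremK}(ii) every power of $M$ is a direct sum of indecomposables with local endomorphism rings, these indecomposable summands form a representative set $\mathcal{S}$, and for $T=\bigoplus_{S\in\mathcal{S}}S$ one checks $T\in\Prod(M)$ (using closure under direct sums) and $\Prod(M)\subseteq\Add(T)$, whence $\Add(T)=\Prod(M)$ with $T$ a summand of a power of $M$ and thus $\Sigma$-pure-injective. Precovering is then transparent, since the evaluation map $T^{(\Hom(T,N))}\to N$ is a $\Prod(M)$-precover of any $N$; and covering follows because a $\Sigma$-pure-injective object admits a perfect decomposition, so $\Add(T)$ is a covering class.

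The main obstacle is precisely the upgrade from precovering to covering in (i)$\Rightarrow$(iii). Two points require care. First, one must verify that the indecomposable summands occurring in the various powers of $M$ genuinely form a \emph{set} of isomorphism classes — this is where $\Sigma$-pure-injectivity is used again, to bound these summands within the pure-injectives dominated by $M$ via Theorem~\ref{theoremK}(iii). Second, one must establish that the resulting decomposition of $T$ is perfect (locally semi-T-nilpotent), which is the condition guaranteeing existence of $\Add(T)$-covers rather than mere precovers. In the locally finitely presented setting I expect this to be handled by transporting the module-theoretic perfect-decomposition arguments through the purity category $\mathbf{P}(\CA)$, using Lemma~\ref{lem:summads-P(A)} to lift summand decompositions from $\mathbf{P}(\CA)$ back to $\CA$.
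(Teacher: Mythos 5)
Your cycle (iii)$\Rightarrow$(ii)$\Rightarrow$(i)$\Rightarrow$(iii) is sound in outline, and the two easy legs match the paper exactly: (iii)$\Rightarrow$(ii) is trivial, and your (ii)$\Rightarrow$(i) is word-for-word the paper's argument (Lemma~\ref{lem:Prod-precover} gives $\Add(M)\subseteq\Prod(M)$, then Theorem~\ref{thm:lfp-Add-subset-Prod}). For (i)$\Rightarrow$(iii) you take a genuinely different route: the paper gets precovers abstractly from closure under direct sums and pure quotients via \cite[Theorem 2.4]{LV}, whereas you build them explicitly, writing $\Prod(M)=\Add(T)$ for $T$ a direct sum of representatives of the indecomposable summands and using the evaluation map $T^{(\Hom(T,N))}\to N$; that construction is correct and more self-contained, except that your justification of the set-theoretic point is off: Theorem~\ref{theoremK}(iii) by itself does not bound the isomorphism classes of indecomposable summands of powers of $M$ (it only embeds the powers purely into sums of copies of a fixed $Y$, and nothing is known about decompositions of $Y$). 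The clean argument is to pass through the purity category: by \cite[Lemma 12.1.8]{Krause2022} indecomposable pure-injectives of $\CA$ correspond to indecomposable injectives of $\mathbf{P}(\CA)$, and in a Grothendieck category these form a set.

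The genuine gap is the upgrade from precovering to covering. Your proof of that step consists of the assertion that a $\Sigma$-pure-injective object admits a perfect (locally semi-T-nilpotent) decomposition and that $\Add$ of such an object is covering, followed by ``I expect this to be handled by transporting the module-theoretic perfect-decomposition arguments through $\mathbf{P}(\CA)$.'' Both ingredients are nontrivial theorems whose published proofs are module-theoretic, and neither is established (or cited as available) for locally finitely presented categories; this deferred transport is precisely the hard content of (i)$\Rightarrow$(iii), so as written the implication is not proved. Note that the paper avoids perfect decompositions altogether: it observes that $\Prod(ev(M))$ in the Grothendieck category $\mathbf{P}(\CA)$ is precovering and, since every pure epimorphism out of a $\Sigma$-pure-injective splits (Corollary~\ref{cor:closure-abelian}), closed under the operations needed to invoke the Enochs-type result \cite[Corollary 2.7]{LV}, which yields covers in $\mathbf{P}(\CA)$; these are then lifted back to $\CA$ using Lemma~\ref{lem:summads-P(A)} and the full faithfulness of $ev$ (which reflects isomorphisms, so the cover property descends). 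Replacing your perfect-decomposition step with this argument--- which uses only lemmas you already invoke elsewhere---would close the gap.
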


\begin{proof}
(i)$\Rightarrow$(ii) From (i) it follows that $\Prod(M)$ is closed under direct sums and pure quotients (since every pure monomorhism splits). 
From \cite[Theorem 2.4]{LV} we obtain that $\Prod(M)$ is a precovering class.

(ii)$\Rightarrow$(i) We apply Lemma \ref{lem:Prod-precover} to obtain $\Add(M)\subseteq \Prod(M)$. The conclusion follows from Theorem \ref{thm:lfp-Add-subset-Prod}. 

(ii)$\Rightarrow$(iii) If $\CA$ is abelian, then we can apply \cite[Corollary 2.7]{LV} and Corollary \ref{cor:closure-abelian} to conclude that $\Prod(M)$ is a covering class.

In the general case, it is not hard to see that if $\Add(M)\subseteq \Prod(M)$ then $\Add(ev(M))\subseteq \Prod(ev(M))$. It follows that the class $\Prod(ev(M))$ is covering in $\mathbf{P}(\CA)$. If $X\in \CA$ and $\overline{\alpha}:\overline{K}\to ev(X)$ is an $\Prod(ev(M))$-cover, from Lemma \ref{lem:summads-P(A)} we can assume that there exists $K\in \CA$ such that $ev(K)=\overline{K}$. Since $ev$ is a fully faithful functor, we observe that there exists $\alpha:K\to X$ such that $ev(\alpha)=\overline{\alpha}$ and that $\alpha$ is a $\Prod(M)$-cover for $X$.

(iii)$\Rightarrow$(ii) This is obvious.
\end{proof}    

A well-known conjecture of Enochs states that in cocomplete abelian categories \textit{all covering classes are closed under direct limits}. Positive solutions for this conjecture are presented in \cite[Theorem 4.4]{angeleri2003} for classes of the form $\Add(M)$ where $M$ is a direct sum of finitely presented modules and in \cite[Theorem 2.2]{Sar-Enochs} for general classes of the form $\Add(M)$, but under some set theoretic hypotheses. The following is a consequence of Corollary \ref{cor:enoch-fp-presented}.

\begin{corollary}\label{cor:enochs-prod-lfp}
Assume that there are no $\omega$-measurable cardinals. If $\CA$ is a locally finitely presented category and $M\in \CA$ such that $\Prod(M)$ is a (pre)covering class then $\Prod(M)$ is closed under directed limits.    
\end{corollary}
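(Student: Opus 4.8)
The plan is to extract from Corollary \ref{cor:enoch-fp-presented} that $M$ is $\Sigma$-pure-injective, and then to present an arbitrary directed colimit of objects of $\Prod(M)$ as a direct summand of a direct sum of such objects, which will already lie in $\Prod(M)$. The blanket set-theoretic assumption enters only to discharge the cardinality hypothesis of Corollary \ref{cor:enoch-fp-presented}: since no $\omega$-measurable cardinals exist, $|\Hom(X,M)|$ is automatically non-$\omega$-measurable for every $X\in\fp\CA$. Thus, as $\Prod(M)$ is in particular precovering, Corollary \ref{cor:enoch-fp-presented} gives that $M$ is $\Sigma$-pure-injective, and Lemma \ref{lem:Prod-precover} gives that $\Prod(M)$ is closed under arbitrary direct sums. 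In particular, for any directed system $(X_\lambda)_{\lambda\in\Lambda}$ with all $X_\lambda\in\Prod(M)$, the object $E=\bigoplus_{\lambda}X_\lambda$ again lies in $\Prod(M)$ and is therefore $\Sigma$-pure-injective (being a direct summand of a product $M^I$, which is $\Sigma$-pure-injective by Corollary \ref{cor:closure-abelian}(a),(b)).

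With $X=\varinjlim X_\lambda$, the conceptual heart of the argument is that $X$, as a directed colimit, is a \emph{pure quotient} of $E$: the canonical morphism $p\colon E\to X$ should be a pure epimorphism. Granting this, Corollary \ref{cor:closure-abelian}(c) applies to the $\Sigma$-pure-injective source $E$ and forces $p$ to split, so that $X$ is a direct summand of $E\in\Prod(M)$; since $\Prod(M)$ is closed under direct summands, $X\in\Prod(M)$, which is exactly the desired closure under directed limits.

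The main obstacle is precisely the purity of $p$ in the sense used here, namely that $p$ be the cokernel of a pure monomorphism with source in $\CA$. This is delicate because $\CA$ need not be abelian, so the kernel of the colimit map need not exist in $\CA$ and the naive presentation of $X$ as a cokernel between coproducts is not built from a pure monomorphism. I would therefore run the splitting in the purity category $\mathbf{P}(\CA)$ and descend. Since $ev$ preserves direct sums, products, cokernels and filtered colimits and sends pure-injective objects to injective ones, $ev(M)$ is $\Sigma$-pure-injective in the Grothendieck category $\mathbf{P}(\CA)$, and $ev(E)$, being a direct summand of $ev(M)^I$, is $\Sigma$-pure-injective there as well by Corollary \ref{cor:closure-abelian}(a),(b). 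In the Grothendieck category $\mathbf{P}(\CA)$ the canonical map $ev(E)\to\varinjlim ev(X_\lambda)=ev(X)$ is a pure epimorphism by the standard description of a directed colimit as a pure quotient of the coproduct of its terms, so it splits by Corollary \ref{cor:closure-abelian}(c); hence $ev(X)$ is a direct summand of $ev(E)$, and thus of some $ev(M^I)$. Finally, Lemma \ref{lem:summads-P(A)} produces a direct summand $W$ of $M^I$ with $ev(W)\cong ev(X)$, and the full faithfulness of $ev$ yields $W\cong X$; therefore $X\cong W\in\Prod(M)$, completing the proof.
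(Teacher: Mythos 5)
Your proposal is correct and takes the paper's intended route: the paper proves this corollary simply by invoking Corollary \ref{cor:enoch-fp-presented} (the blanket assumption of no $\omega$-measurable cardinals makes its cardinality hypothesis vacuous, so (pre)covering yields that $M$ is $\Sigma$-pure-injective), leaving the passage from $\Sigma$-pure-injectivity to closure under directed limits implicit. Your completion of that step --- writing the direct limit as a pure quotient of the direct sum $\bigoplus_\lambda X_\lambda \in \Prod(M)$, splitting it via Corollary \ref{cor:closure-abelian}, and handling the possible failure of kernels in $\CA$ by working in $\mathbf{P}(\CA)$ and descending through Lemma \ref{lem:summads-P(A)} and the full faithfulness of $ev$ --- is precisely the mechanism the paper relies on, and mirrors the technique it uses explicitly in the proof of (ii)$\Rightarrow$(iii) of Corollary \ref{cor:enoch-fp-presented}.
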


\subsection{Product-complete objects}
	
We recall that an object $M$ of $\mathcal{A}$ is called \textit{product-complete} if $\Prod(M)\subseteq \Add(M)$. We will provide a connection between $\Sigma$-pure-injective objects and product-complete objects in $\mathcal{A}$ that extends to locally finite presented categories the characterization proved by Angeleri-H\" ugel in \cite[Proposition 4.8]{Angeleri2000}. 
	
\begin{lemma}\label{lemmasummand}
If $(M_j)_{j\in J}$ is a family of objects from $\mathcal{A}$ with local endomorphism rings, then for any direct summand $X$ of $\bigoplus_{j\in J}M_j$ with $\End(X)$ local there exists $j\in J$ such that $X\cong M_j$.
\end{lemma}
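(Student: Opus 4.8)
The plan is to run the classical Azumaya argument: realize $X$ by a splitting pair and reduce the problem to finding a single index $j$ for which the corresponding diagonal component is an automorphism of $X$. Concretely, fix a split monomorphism $\alpha\colon X\to\bigoplus_{j\in J}M_j$ with retraction $\beta\colon\bigoplus_{j\in J}M_j\to X$, so that $\beta\alpha=1_X$, and let $u_j\colon M_j\to\bigoplus_{j\in J}M_j$ and $q_j\colon\bigoplus_{j\in J}M_j\to M_j$ be the structural inclusions and projections. Put $\alpha_j=q_j\alpha\colon X\to M_j$ and $\beta_j=\beta u_j\colon M_j\to X$. The key reduction is that once we exhibit some $j$ for which $\beta_j\alpha_j$ is an automorphism of $X$, we are done: writing $\gamma=\beta_j\alpha_j$, the maps $\alpha_j$ and $\gamma^{-1}\beta_j$ satisfy $(\gamma^{-1}\beta_j)\alpha_j=1_X$, so $\alpha_j$ is a split monomorphism and $X$ is a direct summand of $M_j$. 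Since $\End(M_j)$ is local, $M_j$ is indecomposable and nonzero, and since $\End(X)$ is local, $X\neq 0$; hence the summand $X$ must be all of $M_j$, i.e. $\alpha_j\colon X\to M_j$ is an isomorphism.

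It remains to produce such an index $j$, and this is where the locality of $\End(X)$ enters. If the index set $J$ were finite (or, more generally, if $\alpha$ factored through a finite subsum $\bigoplus_{j\in F}M_j$), then $1_X=\beta\alpha=\sum_{j\in F}\beta_j\alpha_j$ as a genuine finite sum in $\End(X)$. Because $\End(X)$ is local, its non-units form a two-sided ideal, so a finite sum of non-units is again a non-unit; as the sum equals the unit $1_X$, at least one summand $\beta_j\alpha_j$ must be an automorphism, which is exactly what is needed.

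The main obstacle is therefore the passage to an \emph{infinite} index set $J$: since $X$ is not assumed finitely presented, $\alpha$ need not factor through any finite subsum, and the identity relating $1_X$ to the $\beta_j\alpha_j$ holds only as a locally finite sum, so one cannot directly extract a single automorphism as above. I would resolve this by invoking the exchange property of objects with local endomorphism rings, that is, the infinite Krull--Remak--Schmidt--Azumaya theorem. The cleanest route in the present setting is to transfer the statement along the fully faithful additive functor $ev\colon\CA\to\mathbf{P}(\CA)$ of Lemma \ref{lem:basic-purity}: since $ev$ preserves direct sums and direct summands and, being fully faithful, induces a ring isomorphism $\End(A)\cong\End(ev(A))$ for every object $A$ (so it preserves both locality of endomorphism rings and indecomposability), the hypotheses carry over to the family $(ev(M_j))_{j\in J}$ and the summand $ev(X)$ in the locally finitely presented Grothendieck category $\mathbf{P}(\CA)$. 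There the classical Azumaya theorem for possibly infinite direct sums of objects with local endomorphism rings applies and yields $ev(X)\cong ev(M_j)$ for some $j$; full faithfulness of $ev$ then gives $X\cong M_j$, completing the argument. The whole difficulty is thus concentrated in this finite-to-infinite step, every other ingredient being the elementary unit-extraction and indecomposability observations above.
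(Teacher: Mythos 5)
Your proposal is correct and, in its essential step, coincides with the paper's proof: the paper likewise transfers the problem along the fully faithful functor $ev\colon\CA\to\mathbf{P}(\CA)$ (which preserves direct sums and endomorphism rings) and applies the infinite Krull--Remak--Schmidt--Azumaya theorem in $\mathbf{P}(\CA)$, in the form of \cite[Theorem 2]{Walker&Warfield1976}, before pulling the isomorphism $ev(X)\simeq ev(M_j)$ back through full faithfulness. The unit-extraction discussion in your first two paragraphs is harmless but ultimately unused, since the transfer argument yields the conclusion directly.
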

	\begin{proof}
We will use the functor $ev:\mathcal{A}\to \textbf{P}(\mathcal{A})$.		
Let $X$ a direct summand of $\bigoplus_{j\in J}M_j$ such that the endomorphism ring of $X$ is local. Then $ev(X)$ is a direct summand of $ev(\bigoplus_{j\in J}M_j)\simeq \bigoplus_{j\in J}ev(M_j)$. Moreover, the objects $ev(X)$ and $ev(M_j)$, $j\in J$, have local endomorphism rings. Due to a result of Walker and Warfield \cite[Theorem 2]{Walker&Warfield1976}, it follows that $ev(X)\simeq ev(M_j)$, for some $j\in J$. Because the functor $ev$ is fully faithful, we conclude that there exists $j\in J$ such that $X\simeq M_j$.
	\end{proof}
	
An object $M$ in an additive category with products is called \textit{product-rigid} if for every object $X\in \Prod(M)$ with local endomorphism ring, $X$ is isomorphic to a direct summand of $M$.
	
\begin{theorem}\label{thm:productcompleteA}
Let $M$ be an object of $\mathcal{A}$. The following are equivalent:
		\begin{enumerate}[{\rm (i)}]
			\item $\Prod(M) = \Add(M)$.
			
			\item $\Prod(M)\subseteq \Add(M)$.
			
			\item \begin{enumerate}[{\rm (a)}]
				\item $M$ is $\Sigma$-pure-injective.
				\item $M$ is product-rigid.
			\end{enumerate}
		\end{enumerate}
\end{theorem}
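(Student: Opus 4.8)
The plan is to close the cycle (i) $\Rightarrow$ (ii) $\Rightarrow$ (iii) $\Rightarrow$ (i), of which (i) $\Rightarrow$ (ii) is immediate, and to feed everything through the decomposition results Theorem \ref{theoremK}, Corollary \ref{cor:closure-abelian} and Lemma \ref{lemmasummand}.

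For (ii) $\Rightarrow$ (iii) I would extract $\Sigma$-pure-injectivity first and product-rigidity second. To obtain (a), fix an arbitrary set $I$. Then $M^I\in\Prod(M)\subseteq\Add(M)$, so $M^I$ is a direct summand of some $M^{(J)}$. A split monomorphism is always a pure monomorphism: applying $ev$ (which preserves split monos) and invoking Lemma \ref{lem:basic-purity}(b) shows $ev$ of the inclusion is a monomorphism. Hence $M^I$ is a pure subobject of a direct sum of copies of $M$, and since $I$ was arbitrary the single object $Y=M$ witnesses condition (iii) of Theorem \ref{theoremK}, so $M$ is $\Sigma$-pure-injective. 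For (b), let $X\in\Prod(M)$ have $\End(X)$ local; again $X\in\Add(M)$, so $X$ is a direct summand of some $M^{(J)}$. Using (a), Theorem \ref{theoremK}(ii) applied to $M$ (as the product over a singleton) gives $M\cong\bigoplus_{l\in L}N_l$ with each $N_l$ indecomposable with local endomorphism ring, whence $M^{(J)}$ is a direct sum of copies of the $N_l$. Lemma \ref{lemmasummand} then forces $X\cong N_l$ for some $l$, and $N_l$ is a direct summand of $M$, which is product-rigidity.

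For (iii) $\Rightarrow$ (i) I must prove both inclusions. The inclusion $\Add(M)\subseteq\Prod(M)$ is exactly the implication (ii) $\Rightarrow$ (i) of Theorem \ref{thm:lfp-Add-subset-Prod}: since $M$ is $\Sigma$-pure-injective, the canonical map $\bigoplus_I M\to\prod_I M$ splits, so every object of $\Add(M)$ is a direct summand of some $M^I$. For $\Prod(M)\subseteq\Add(M)$, take $X\in\Prod(M)$, a direct summand of some $M^I$. By Corollary \ref{cor:closure-abelian}(a) the object $M^I$ is $\Sigma$-pure-injective, and since the summand inclusion $X\to M^I$ is a pure monomorphism, Corollary \ref{cor:closure-abelian}(b), applied with $M^I$ in the role of the $\Sigma$-pure-injective object, shows $X$ is $\Sigma$-pure-injective. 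Theorem \ref{theoremK}(ii) then yields $X\cong\bigoplus_k X_k$ with each $X_k$ indecomposable with local endomorphism ring. Each $X_k$ is a direct summand of $X$, hence of $M^I$, so $X_k\in\Prod(M)$, and product-rigidity makes $X_k$ a direct summand of $M$. Therefore $X$ is a direct summand of a direct sum of copies of $M$, i.e. $X\in\Add(M)$.

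Most of this is bookkeeping with the decomposition theorem; the one genuinely load-bearing observation is the first step of (ii) $\Rightarrow$ (iii), where one must recognize that realizing a product $M^I$ as a direct summand of a direct sum $M^{(J)}$ is precisely a realization of $M^I$ as a pure subobject of a coproduct of copies of the single object $Y=M$, matching Theorem \ref{theoremK}(iii) verbatim. The remaining subtlety I expect to watch is in (iii) $\Rightarrow$ (i): the decomposition and splitting results must be applied to the correct $\Sigma$-pure-injective object, namely the product $M^I$ (which is $\Sigma$-pure-injective by Corollary \ref{cor:closure-abelian}(a)), rather than to $M$ itself.
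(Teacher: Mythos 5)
Your proof is correct and follows essentially the same route as the paper: (ii)$\Rightarrow$(iii) by recognizing that a split embedding $M^I\to M^{(J)}$ witnesses condition (iii) of Theorem \ref{theoremK} and then applying Lemma \ref{lemmasummand}, and (iii)$\Rightarrow$(i) via the decomposition into indecomposables with local endomorphism rings combined with product-rigidity. The only (harmless) difference is in (iii)$\Rightarrow$(i): the paper decomposes $M^I$ itself and concludes $M^I\in\Add(M)$ directly, whereas you first transfer $\Sigma$-pure-injectivity to the summand $X$ via Corollary \ref{cor:closure-abelian} and decompose $X$; both variants rest on the same tools and are equally valid.
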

	\begin{proof}
(i)$\Rightarrow$(ii) It is obvious.
		
(ii)$\Rightarrow$(iii) (a) Since $\Prod(M)\subseteq \Add(M)$, it follows that every product of copies of $M$ is a direct summand in a direct sum of copies of $M$. Using Theorem \ref{theoremK}, we obtain that $M$ is a $\Sigma$-pure-injective object.
		
(b) Let $X\in \Prod(M)$ be an object with local endomorphism ring. By (ii) we have that $X\in \Add(M)$. Also, since $M$ is $\Sigma$-pure-injective, we can use Theorem \ref{theoremK} to see that $M$ is isomorphic to a direct sum $\bigoplus_{j\in J} X_j$ of objects with local endomorphism rings. Thus, we obtain that $X$ is a direct summand of a direct sum $\bigoplus_{j\in J} X_j^{(I)}$ of objects with local endomorphism rings. According to Lemma \ref{lemmasummand}, it follows that $X$ is isomorphic to one of the objects $X_j$.
		
(iii)$\Rightarrow$(i) Since for $A$ is $\Sigma$-pure-injective, we have $\Add(M)\subseteq \Prod(M)$, and it is enough to prove that $\Prod(M)\subseteq \Add(M)$.
		
Suppose that $X$ is a direct summand of a direct product $\prod_{i\in I} M$.		
Because $M$ is $\Sigma$-pure-injective, $\prod_{i\in I} M$ is isomorphic to a direct sum of indecomposable objects with local endomorphism rings (see Theorem \ref{theoremK}, implication (i)$\Rightarrow$(iii)), so $\prod_{i\in I} M\simeq \bigoplus_{j\in J} X_j$, where $X_j\in \Prod(M)$ are objects with local endomorphism rings.
		
But $M$ is product-rigid, hence every $X_j$ is isomorphic to a direct summand of $M$. 
It follows that $\prod_{i\in I} M\in \Add(M)$, hence $X\in \Add(M)$.
	\end{proof}

As an application, we obtain the dual of Corollary \ref{cor:angeleri-6.11}. This generalizes \cite[Theorem 5.1]{angeleri2003} and \cite[Theorem 2.3]{Kr-Sa} (see also \cite[Proposition 13.54]{GoTr-12}) from module categories to locally finitely presented categories.

\begin{corollary}\label{cor:add-env}
The following are equivalent for an object $M\in\CA$:
\begin{enumerate}[{\rm (i)}]
    \item $M$ is product-complete;
    \item $\Add(M)$ is a preenveloping class in $\CA$;
    \item $\Add(M)$ is an enveloping class. 
\end{enumerate}
\end{corollary}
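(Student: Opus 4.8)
The plan is to mirror the proof of Corollary \ref{cor:angeleri-6.11}, dualizing throughout: replace direct sums by direct products, precovers by preenvelopes, and $\Prod(M)$ by $\Add(M)$. I would organize the argument as the cycle (i)$\Rightarrow$(iii)$\Rightarrow$(ii)$\Rightarrow$(i).

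The implication (iii)$\Rightarrow$(ii) is immediate, since every envelope is in particular a preenvelope. For (ii)$\Rightarrow$(i) I would invoke the dual of Lemma \ref{lem:Prod-precover}: in an idempotent complete additive category with direct products, every preenveloping class is closed under direct products. Hence $\Add(M)$ is closed under direct products, and since it is also closed under direct summands, every direct product of copies of $M$ lies in $\Add(M)$. This gives $\Prod(M)\subseteq\Add(M)$, i.e.\ $M$ is product-complete.

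The substantial work is in (i)$\Rightarrow$(iii). Assuming $M$ product-complete, Theorem \ref{thm:productcompleteA} gives $\Add(M)=\Prod(M)$ and that $M$ is $\Sigma$-pure-injective. I would first treat the case where $\CA$ is abelian: here $\Add(M)=\Prod(M)$ is closed under direct products and, by Corollary \ref{cor:closure-abelian} (a pure monomorphism into a product of copies of $M$ splits), under pure subobjects. The dual of \cite[Theorem 2.4]{LV} then yields that $\Add(M)$ is preenveloping, and the dual of \cite[Corollary 2.7]{LV} together with Corollary \ref{cor:closure-abelian} upgrades this to enveloping. For a general locally finitely presented $\CA$, I would pass to the purity category $\mathbf{P}(\CA)$ via $ev$. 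Because $ev$ commutes with direct sums and direct products and, by Lemma \ref{lem:summads-P(A)}, carries direct summands to direct summands, product-completeness of $M$ transfers to product-completeness of $ev(M)$ in the Grothendieck category $\mathbf{P}(\CA)$, so the abelian case shows that $\Add(ev(M))$ is enveloping there. Given $X\in\CA$, I would take an $\Add(ev(M))$-envelope $\overline{\alpha}\colon ev(X)\to\overline{K}$; by Lemma \ref{lem:summads-P(A)} there is $K\in\Add(M)$ with $ev(K)\cong\overline{K}$, and by full faithfulness of $ev$ a unique $\alpha\colon X\to K$ with $ev(\alpha)=\overline{\alpha}$. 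One then checks that $\alpha$ is an $\Add(M)$-envelope of $X$.

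I expect the main obstacle to be the enveloping (as opposed to merely preenveloping) conclusion and its transport along $ev$: one must confirm that $ev(M)$ really is product-complete in $\mathbf{P}(\CA)$, so that the Grothendieck-category version applies, and that the left-minimality defining an envelope descends from $\overline{\alpha}$ to $\alpha$ under the fully faithful functor $ev$. Both points reduce to the full faithfulness of $ev$ and the fact that it reflects isomorphisms, but they require care since the factorization and minimality conditions must be matched on both sides.
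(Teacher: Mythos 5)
Your skeleton agrees with the paper on the easy implications: (iii)$\Rightarrow$(ii) is trivial, and (ii)$\Rightarrow$(i) is exactly the paper's argument via the dual of Lemma \ref{lem:Prod-precover} (a legitimate dualization, since that lemma's hypotheses --- idempotent complete additive category with the relevant (co)products --- are self-dual). For the preenveloping part the paper is also more direct than you are: once Theorem \ref{thm:productcompleteA} gives $\Add(M)=\Prod(M)$, one is done, because every class of the form $\Prod(M)$ is preenveloping (the canonical morphism $X\to M^{\Hom(X,M)}$ is a $\Prod(M)$-preenvelope); your detour through closure under products and pure subobjects and a Rada--Saor\'in-type criterion is heavier than needed, though not wrong in substance.

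The genuine gap is in your upgrade from preenveloping to enveloping, where you invoke ``the dual of \cite[Corollary 2.7]{LV}''. As the paper's own use of that result in Corollary \ref{cor:angeleri-6.11} shows, it is an Enochs-type \emph{cover}-existence theorem in locally finitely presented \emph{abelian} categories (a precovering class with the appropriate closure under coproducts, pure quotients and direct limits is covering). Its proof rests on AB5, generators and well-ordered direct-limit arguments that do not dualize: the opposite of a locally finitely presented Grothendieck category is not a category of the same kind, and no enveloping counterpart is proved in \cite{LV}. This cover/envelope asymmetry is precisely why the paper does not argue your way; instead it proves (ii)$\Rightarrow$(iii) by adapting the proof of \cite[Theorem 2.3]{Kr-Sa}, transported through the functor $ev$, whose engine is \cite[Proposition 1.2]{Kr-Sa}: since $M$ is product-complete it is $\Sigma$-pure-injective, so the target of an $\Add(M)$-preenvelope decomposes into objects with local endomorphism rings, this decomposition complements direct summands, and that is what yields minimality, i.e.\ an envelope. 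Your final step --- transferring an envelope along the fully faithful functor $ev$ using Lemma \ref{lem:summads-P(A)} and the fact that $ev$ reflects isomorphisms --- is fine, but it sits on top of the unproved abelian-case claim, so the proposal as written does not establish (i)$\Rightarrow$(iii) (equivalently, the implication from preenveloping to enveloping).
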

\begin{proof}
i)$\Rightarrow$ii) This follows from the equality $\Add(M)=\Prod(M)$, since all classes of the form $\Prod(M)$ are preenveloping. 

ii)$\Rightarrow$i) We can use the dual of Lemma \ref{lem:Prod-precover} to conclude that $\Prod(M)\subseteq \Add(M)$.

ii)$\Rightarrow$iii) The proof is the same as that of \cite[Theorem 2.3]{Kr-Sa}, using this time the evaluation functor $ev:\CA\to\mathbf{P}(\CA)$ together with \cite[Proposition 1.2]{Kr-Sa}.
\end{proof}
    
	\section{The case of compactly generated triangulated categories}
In this section, $\mathcal{T}$ will be a compactly generated triangulated category. This means that $\calT$ is triangulated, has all direct sums (coproducts), the triangulated subcategory $\mathcal{T}^c$ of all compact objects from $\mathcal{T}$ ($C$ is compact if $\Hom(C,-)$ commutes with respect to all direct sums) is small and $\Hom(\calT^c,X)\neq 0$ for all nonzero objects $X\in \calT$. We recall that $\calT$ also has products.

\subsection{Purity in compactly generated triangulated categories} We recall here, for further use, some basic information about purity in compactly generated triangulated categories.
 We denote by $\Modr\mathcal{T}^c$ the category of functors $(\mathcal{T}^c)^{op}\to \textbf{Ab}$, a locally finitely presented Grothendieck category.

There is a restricted Yoneda functor, introduced in \cite{Krause2000}, 
$$H:\calT\to \Modr\mathcal{T}^c,\ M\mapsto H_M = \Hom(-,M)|_{\mathcal{T}^c}.$$
A triangle $A\overset{\alpha}\to B\overset{\beta}\to C\overset{\phi}\to A[1]$ is \textit{pure} if $\Hom(\calT^c,\phi)=0$. This is equivalent to the fact that the induced sequence $0\to H_A\to H_B\to H_C\to 0$ is a short exact sequence in $\Modr\calT^c$. In this case, we will say that $\alpha$ is \textit{pure monomorphism} and $\beta$ is \textit{pure epimorphism}, $A$ is a \textit{pure subobject} of $B$ and $C$ is a \textit{pure quotient} of $B$. An object $M$ is \textit{pure-injective} if $\Hom(\alpha,M)$ is surjective for all pure monomorphisms $\alpha$ (i.e., $\Hom(-,M)$ transforms pure triangles in exact sequences of abelian groups).   

The equivalences between (i), (ii) and (iv) in the following theorem were proved in \cite[Theorem 1.8]{Krause2000}. The equivalence between (ii) and (iii) was observed in \cite[Section 1.B]{Prest23}.
	
\begin{theorem}\label{Krausepureinjective}
The following are equivalent for an object $M\in \mathcal{T}$:
		\begin{enumerate}[{\rm (i)}]
			\item $M$ is pure-injective;
			\item $H_M = \Hom(-,M)|_{\mathcal{T}^c}$ is injective in $\Modr\mathcal{T}^c$;
            \item $H_M = \Hom(-,M)|_{\mathcal{T}^c}$ is pure-injective in $\Modr\mathcal{T}^c$;
			\item for every $X$ in $\mathcal{T}$ the map $\Hom(X,M)\to \Hom(H_X,H_M), \phi\mapsto \Hom(-,\phi)|_{\mathcal{T}^c}$ is an isomorphism.
		\end{enumerate}
\end{theorem}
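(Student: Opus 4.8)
The plan is to reprove the statement by transporting the purity machinery along the restricted Yoneda functor $H\colon\calT\to\Modr\calT^c$, in exact analogy with the way the evaluation functor $ev\colon\CA\to\mathbf{P}(\CA)$ was used in the previous section; the core trio (i)$\Leftrightarrow$(ii)$\Leftrightarrow$(iv) is \cite[Theorem 1.8]{Krause2000} and the link (ii)$\Leftrightarrow$(iii) is the observation of \cite[Section 1.B]{Prest23}. First I would record the structural properties that make everything run: $H$ is a homological functor preserving all direct products and coproducts, its restriction to $\mathcal{T}^c$ is the Yoneda embedding with image the finitely generated projective objects $H_C=\Hom(-,C)|_{\mathcal{T}^c}$ of $\Modr\calT^c$, and a triangle is pure precisely when $H$ carries it to a short exact sequence. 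From these one builds, for each $X\in\calT$, a \emph{pure-projective resolution}: iterating canonical pure epimorphisms $\bigoplus_{\lambda}C_\lambda\to X$ out of coproducts of compact objects produces a sequence of pure triangles that $H$ turns into a projective resolution $\cdots\to H_{P_1}\to H_{P_0}\to H_X\to 0$ in $\Modr\calT^c$.

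Next I would establish (i)$\Rightarrow$(iv). If $M$ is pure-injective then $\Hom(-,M)$ sends pure triangles to short exact sequences, so applied to a pure-projective resolution of $X$ it yields an exact complex. On the other hand $\Hom(-,H_M)$ applied to the projective resolution of $H_X$ computes $\Hom(H_X,H_M)$ as the kernel at the $H_{P_0}$-spot, and on the projective terms the Yoneda isomorphism on compacts gives
\[
\Hom\!\left(H_{\bigoplus_\lambda C_\lambda},H_M\right)\cong\prod_\lambda\Hom(H_{C_\lambda},H_M)\cong\prod_\lambda\Hom(C_\lambda,M)\cong\Hom\!\left(\textstyle\bigoplus_\lambda C_\lambda,M\right).
\]
Matching the two complexes identifies $\Hom(H_X,H_M)$ with $\Hom(X,M)$ compatibly with the natural map $\phi\mapsto H_\phi$, which is exactly the isomorphism asserted in (iv).

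For (i)$\Leftrightarrow$(ii) I would pass through the equivalence between pure-injective objects of $\calT$ and injective objects of $\Modr\calT^c$. The accessible direction is (ii)$\Rightarrow$(i): given $H_M$ injective, choose a pure monomorphism $M\to N$ into a pure-injective $N$ for which $H_M\to H_N$ is an injective envelope; since $H_M$ is already injective this map is an isomorphism, so the cone $C$ has $H_C=0$, and because $\Hom(\mathcal{T}^c,C)=0$ forces $C=0$ in a compactly generated category, $M\cong N$ is pure-injective. For (i)$\Rightarrow$(ii) one lifts the injective envelope $H_M\hookrightarrow H_N$ (with $N$ pure-injective) to a pure monomorphism $M\to N$ using the full faithfulness (iv) already proved for $N$, and then splits it by pure-injectivity of $M$, exhibiting $H_M$ as a summand of the injective $H_N$. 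The step I expect to be the genuine obstacle is the input feeding both arguments: the realization of every injective of $\Modr\calT^c$ as some $H_N$ with $N$ pure-injective, equivalently the existence of pure-injective envelopes in $\calT$. This is precisely where one must lift the abelian approximation theory of the Grothendieck category $\Modr\calT^c$ back into the triangulated setting, and it is the heart of \cite{Krause2000}; closing the loop so that (iv) also implies (i) rests on the same realization.

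Finally, (ii)$\Leftrightarrow$(iii) I would argue as in \cite{Prest23} by showing that $H_M$ is always fp-injective. By a classical theorem of Freyd the category $\modr\calT^c$ of finitely presented functors is a Frobenius abelian category in which the representable functors are exactly the projective-injective objects; and $H_M$ is flat — every map from a finitely presented functor to it factors through a representable, since the cofibre of a map between compact objects is again compact — hence it is a filtered colimit of representables, that is, a filtered colimit of finitely presented injectives, and therefore fp-injective in the locally coherent category $\Modr\calT^c$. An fp-injective object is a pure subobject of its injective envelope, so if it is in addition pure-injective that pure embedding splits and the object is injective; thus for $H_M$ the properties pure-injective and injective coincide, while injective $\Rightarrow$ pure-injective is automatic. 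This gives (ii)$\Leftrightarrow$(iii) and completes the chain of equivalences.
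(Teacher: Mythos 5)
Your proposal is correct in every step it actually carries out, and it is strictly more informative than the paper's own treatment, which contains no argument at all: the paper simply cites \cite[Theorem 1.8]{Krause2000} for the equivalence of (i), (ii), (iv) and \cite[Section 1.B]{Prest23} for (ii)$\Leftrightarrow$(iii). Measured against those sources, your route differs in one substantive way: Krause proves the core equivalences via Brown representability (for an injective $Q$ of $\Modr\calT^c$ the functor $X\mapsto\Hom(H_X,Q)$ is cohomological and sends coproducts to products, hence is represented by a pure-injective object $TQ$ with $H_{TQ}\cong Q$), whereas your (i)$\Rightarrow$(iv) is an elementary argument by pure-projective resolutions, using only that coproducts of compacts give pure epimorphisms, the canonical identification $\Hom(H_{\bigoplus_\lambda C_\lambda},H_M)\cong\Hom(\bigoplus_\lambda C_\lambda,M)$, and a five-lemma comparison; this is sound (the exactness of $\Hom(-,M)$ on pure triangles that you use does hold, because shifts of pure triangles are pure and consecutive maps in a triangle compose to zero). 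Likewise your proof of (ii)$\Leftrightarrow$(iii) --- $H_M$ is always fp-injective, via Freyd's Frobenius theorem for $\modr\calT^c$ and the factorization of maps from finitely presented functors through representables --- is precisely the Prest--Wagstaffe observation, correctly fleshed out.

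The one thing you do not prove is the one you flag yourself: that every injective object of $\Modr\calT^c$ is of the form $H_N$ with $N$ pure-injective. Be aware that this is not a peripheral lemma one may quote independently of the theorem; it is the Brown-representability core of \cite[Theorem 1.8]{Krause2000}, and without it none of the implications from functor-level hypotheses back to $\calT$ ((ii)$\Rightarrow$(i), (iii)$\Rightarrow$(i), and your merely gestured-at (iv)$\Rightarrow$(i)) can be closed: any attempt to realize a morphism $H_B\to H_M$ as $H_g$ for some $g\colon B\to M$ presupposes the full faithfulness (iv) for $M$, which is exactly what one is trying to prove, so all such arguments are circular. Thus as a self-contained proof the proposal is incomplete at precisely that point; as a proof modulo the same citation the paper relies on for everything, it is fine. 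One simplification worth recording: your appeal to realization in (i)$\Rightarrow$(ii) is avoidable. Since $H$ preserves products and coproducts, pure-injectivity of $M$ lets you extend the summation morphism $M^{(I)}\to M$ along the canonical pure monomorphism $M^{(I)}\to M^I$; applying $H$ and the Jensen--Lenzing summation criterion in the functor category $\Modr\calT^c$ yields (iii), and your fp-injectivity argument then upgrades (iii) to (ii). Only the converse directions genuinely need Brown representability, which confirms your diagnosis of where the irreducible difficulty lies.
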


\subsection{$\Sigma$-pure-injective objects in compactly generated triangulated categories and ${\Add(M)\subseteq \Prod(M)}$} 	
An object $M$ is \textit{$\Sigma$-pure-injective} if all direct sums of copies of $M$ are pure-injective. 	
Since $H_{M^{(J)}}\simeq (H_M)^{(J)}$, we have that $M\in \mathcal{T}$ is $\Sigma$-pure-injective if and only if $H_M$ is $\Sigma$-pure-injective in $\Modr\mathcal{T}^c$. Some of the characterizations of $\Sigma$-pure-injective objects in locally finitely presented categories have similar correspondents for $\Sigma$-pure-injective objects in compactly generated triangulated categories, \cite{Bennett-Tennenhaus2023}. We will use the following theorem.
	
\begin{theorem}\cite[Theorem 1]{Bennett-Tennenhaus2023}\label{theoremBT} Let $M\in\mathcal{T}$. The following are equivalent:
		\begin{enumerate}[{\rm (i)}]
			\item $M$ is $\Sigma$-pure-injective.
			\item $M$ is pure-injective and for any set $I$, the product $M^I$ is isomorphic to a direct sum of indecomposable pure-injective objects with local endomorphism rings.
		\end{enumerate}
	\end{theorem}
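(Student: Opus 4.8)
The plan is to transport the question across the restricted Yoneda functor $H:\calT\to\Modr\calT^c$ into the locally finitely presented Grothendieck category $\Modr\calT^c$, apply there the characterization of $\Sigma$-pure-injectivity already available for such categories (the equivalence (i)$\Leftrightarrow$(ii) of Theorem \ref{theoremK}), and then lift the resulting decomposition back to $\calT$. Three facts make this round trip possible: $H$ carries the product $M^I$ to $(H_M)^I\simeq H_{M^I}$ and a coproduct $\bigoplus_k X_k$ to $\bigoplus_k H_{X_k}$, the latter because the objects of $\calT^c$ are compact; by Krause's theorem the assignment $N\mapsto H_N$ restricts to an equivalence between the pure-injective objects of $\calT$ and the injective objects of $\Modr\calT^c$ (this is the content of Theorem \ref{Krausepureinjective}); and, as recorded just before the statement, $M$ is $\Sigma$-pure-injective in $\calT$ if and only if $H_M$ is $\Sigma$-pure-injective in $\Modr\calT^c$.

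For (i)$\Rightarrow$(ii) I would first note that $M$ is pure-injective, taking a one-element index set in the definition of $\Sigma$-pure-injective. Fixing a set $I$, the product $M^I$ is again pure-injective, since $\Hom(-,M^I)\simeq\prod_I\Hom(-,M)$ sends a pure triangle to a product of short exact sequences. Hence $H_{M^I}\simeq(H_M)^I$ is an injective and $\Sigma$-pure-injective object of $\Modr\calT^c$, so Theorem \ref{theoremK} yields a decomposition $H_{M^I}\simeq\bigoplus_k\bar X_k$ into indecomposables with local endomorphism rings; being summands of an injective, the $\bar X_k$ are indecomposable injectives. By Theorem \ref{Krausepureinjective} each $\bar X_k$ equals $H_{X_k}$ for a pure-injective $X_k\in\calT$, and since $H$ is fully faithful on pure-injectives we get $\End(X_k)\simeq\End(\bar X_k)$ local, so $X_k$ is indecomposable. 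Forming the coproduct in $\calT$ gives $H_{\bigoplus_k X_k}\simeq\bigoplus_k H_{X_k}\simeq H_{M^I}$, which is injective; Theorem \ref{Krausepureinjective} then forces $\bigoplus_k X_k$ to be pure-injective.

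Now $M^I$ and $\bigoplus_k X_k$ are both pure-injective with isomorphic images under $H$, so by full faithfulness of $H$ on pure-injectives (Theorem \ref{Krausepureinjective}(iv)) the isomorphism $H_{M^I}\simeq H_{\bigoplus_k X_k}$ lifts to mutually inverse morphisms in $\calT$, giving $M^I\simeq\bigoplus_k X_k$, the desired decomposition. The converse (ii)$\Rightarrow$(i) is the easy direction: applying $H$ to $M^I\simeq\bigoplus_k X_k$ produces $(H_M)^I\simeq\bigoplus_k H_{X_k}$ with each $H_{X_k}$ indecomposable of local endomorphism ring (again by full faithfulness), so every product of copies of $H_M$ splits into indecomposables with local endomorphism rings; Theorem \ref{theoremK} makes $H_M$ $\Sigma$-pure-injective, and the transfer recalled above makes $M$ $\Sigma$-pure-injective.

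The one delicate point is the lifting of the indecomposable decomposition from $\Modr\calT^c$ to $\calT$: coproducts of injectives in a locally finitely presented Grothendieck category need not be injective, and $H$ is only well behaved on pure-injective objects, so one cannot a priori assert that $\bigoplus_k X_k$ is pure-injective. The device that resolves this is to verify pure-injectivity of $\bigoplus_k X_k$ not from the definition but from the fact that its image $\bigoplus_k H_{X_k}$ is isomorphic to the injective object $H_{M^I}$, after which Theorem \ref{Krausepureinjective} applies; the rest is routine transport of sums and products through $H$.
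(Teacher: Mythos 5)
The paper itself gives no proof of this statement: it is imported wholesale as \cite[Theorem 1]{Bennett-Tennenhaus2023}, so there is no internal argument to compare yours against. Your transfer argument is correct, and it is exactly in the spirit of how this paper uses the restricted Yoneda functor elsewhere (compare the proof of Theorem \ref{thm:triang-Add-subset-Prod}): push everything along $H$, which turns products into products, coproducts into coproducts (by compactness of the objects of $\calT^c$), and pure-injectives into injectives; apply the locally finitely presented characterization of Theorem \ref{theoremK} inside $\Modr\calT^c$; then lift the decomposition back using full faithfulness of $H$ on maps into pure-injectives (Theorem \ref{Krausepureinjective}(iv)). The device you isolate at the end --- certifying pure-injectivity of $\bigoplus_k X_k$ not from the definition but from the isomorphism $H_{\bigoplus_k X_k}\simeq\bigoplus_k H_{X_k}\simeq H_{M^I}$ with an injective object --- is indeed the step that makes the lift legitimate, and your (ii)$\Rightarrow$(i) direction is routine and correct.

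One step needs a better citation than you give it: the claim that each summand $\overline{X}_k$ of $H_{M^I}$ is of the form $H_{X_k}$ for a pure-injective $X_k\in\calT$ does not follow literally from conditions (i)--(iv) of Theorem \ref{Krausepureinjective} as stated in the paper; it is the essential-surjectivity half of Krause's equivalence between pure-injective objects of $\calT$ and injective objects of $\Modr\calT^c$, which is in \cite{Krause2000} but not restated here. Alternatively, you can patch it with what the paper does state: since $M^I$ is pure-injective, Theorem \ref{Krausepureinjective}(iv) gives a ring isomorphism $\End(M^I)\cong\End(H_{M^I})$, so the idempotent cutting out $\overline{X}_k$ lifts to an idempotent $e_k$ of $\End(M^I)$; this $e_k$ splits because a triangulated category with coproducts is idempotent complete, and the resulting summand $X_k$ of $M^I$ is pure-injective with $H_{X_k}\simeq\overline{X}_k$, since additive functors preserve split idempotent decompositions. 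With that repair (or the explicit reference) your proof is complete.
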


We have a corollary similar with Corollary \ref{cor:closure-abelian}.

\begin{corollary}\label{cor:closure-triangulated}\cite[Corollary 6.10]{Bennett-Tennenhaus2023} 
Suppose that $M$ is a $\Sigma$-pure-injective object from $\calT$. The following are true:
\begin{enumerate}[{\rm a)}]
    \item For every set $I$ the object $M^I$ is $\Sigma$-pure-injective. 

\item If $\alpha:K\to M$ is a pure monomorphism then $K$ is $\Sigma$-pure-injective and $\alpha$ is a splitting monomorphism.

\item Every pure epimorphism $M\to N$ is a splitting epimorphism.   
\end{enumerate}
\end{corollary}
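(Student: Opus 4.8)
The plan is to transfer the whole statement across the restricted Yoneda functor $H:\calT\to\Modr\calT^c$ and to invoke the already established Corollary \ref{cor:closure-abelian} inside the locally finitely presented Grothendieck category $\Modr\calT^c$. The translation I will rely on is the following dictionary: $H$ preserves products, and since the objects of $\calT^c$ are compact it also preserves direct sums, so $H_{M^I}=(H_M)^I$; an object $X\in\calT$ is $\Sigma$-pure-injective exactly when $H_X$ is $\Sigma$-pure-injective in $\Modr\calT^c$; $X$ is pure-injective exactly when $H_X$ is injective (Theorem \ref{Krausepureinjective}); and a triangle in $\calT$ is pure precisely when $H$ turns it into a short exact sequence.

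Granting this dictionary, a) is immediate: $H_{M^I}=(H_M)^I$ and $H_M$ is $\Sigma$-pure-injective because $M$ is, so Corollary \ref{cor:closure-abelian}(a) applied in $\Modr\calT^c$ shows $(H_M)^I$, hence $M^I$, is $\Sigma$-pure-injective. For b), write the pure monomorphism $\alpha$ as the first map of a pure triangle $K\overset{\alpha}\to M\to N\to K[1]$. The crucial point will be to show that $H_\alpha:H_K\to H_M$ is a \emph{pure} monomorphism in $\Modr\calT^c$, not merely a monomorphism. Once this is known, Corollary \ref{cor:closure-abelian}(b) applied to the $\Sigma$-pure-injective object $H_M$ yields that $H_K$ is $\Sigma$-pure-injective, whence $K$ is $\Sigma$-pure-injective and in particular pure-injective. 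The splitting of $\alpha$ then falls out of the very definition of pure-injectivity of $K$: as $\alpha$ is a pure monomorphism, $\Hom(\alpha,K):\Hom(M,K)\to\Hom(K,K)$ is surjective, so a preimage $r$ of $1_K$ satisfies $r\alpha=1_K$.

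Part c) is then formal. A pure epimorphism $\beta:M\to N$ is the second map of a pure triangle $K\overset{\alpha}\to M\overset{\beta}\to N\to K[1]$, whose first map $\alpha$ is a pure monomorphism; by b), $\alpha$ is a split monomorphism, and in a triangulated category a triangle whose first map splits has its second map a split epimorphism, so $\beta$ splits.

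The main obstacle is the step highlighted in b), namely that $H$ sends pure monomorphisms to pure monomorphisms. Equivalently, the short exact sequence $0\to H_K\to H_M\to H_N\to 0$ coming from the pure triangle must be pure-exact in $\Modr\calT^c$, and applying $\Hom(F,-)$ shows that this reduces to $\Ext^1(F,H_K)=0$ for every finitely presented $F$, i.e.\ to the fact that every object in the essential image of $H$ is fp-injective (absolutely pure) in $\Modr\calT^c$. This is a known structural property of the restricted Yoneda functor (see \cite{Krause2000}, \cite{Bennett-Tennenhaus2023}). I note that on representable test functors $H_D$ with $D\in\calT^c$ the required surjectivity $\Hom(H_D,H_M)\to\Hom(H_D,H_N)$ is, via Yoneda, exactly the surjectivity of $\Hom(D,\beta)$, which holds because the purity condition $\Hom(\calT^c,\phi)=0$ on the connecting map $\phi$ forces it; the fp-injectivity of $H_K$ is what upgrades this from representables to all finitely presented $F$.
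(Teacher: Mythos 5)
Your proof is correct, but it follows a genuinely different route from the paper: the paper gives no proof of this corollary at all, citing it as \cite[Corollary 6.10]{Bennett-Tennenhaus2023}, whereas you deduce it from the paper's own abelian-category version (Corollary \ref{cor:closure-abelian}) by transfer along the restricted Yoneda functor $H:\calT\to\Modr\calT^c$. The step you correctly single out as the crux --- that $H$ sends a pure monomorphism of $\calT$ to a \emph{pure} monomorphism of $\Modr\calT^c$ --- does hold for the reason you give: every object in the essential image of $H$ is fp-injective (this is in \cite{Krause2000}, and it is also what underlies the equivalence of (ii) and (iii) in Theorem \ref{Krausepureinjective}), and a short exact sequence whose first term is fp-injective is automatically pure, since $\Ext^1(F,H_K)=0$ kills the obstruction for every finitely presented $F$; your aside about first checking representable test functors is superfluous but harmless. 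Two choices in your write-up are worth noting as the right ones: deducing the splitting of $\alpha$ in b) directly from pure-injectivity of $K$ inside $\calT$ (rather than trying to pull a splitting of $H_\alpha$ back through $H$, which is not fully faithful), and reducing c) to b) via the standard fact that a triangle whose first map is a split monomorphism has vanishing connecting map, hence a split epimorphism as second map. What your approach buys is a self-contained argument in exactly the spirit of the paper's other proofs in Section 4 (compare the transfer arguments in Theorem \ref{thm:triang-Add-subset-Prod}), at the cost of one extra external input (fp-injectivity of the image of $H$); what the paper's citation buys is brevity, outsourcing the entire statement to Bennett-Tennenhaus's paper.
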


In compactly generated triangulated categories, there is a similar characterization as that presented in Theorem \ref{thm:lfp-Add-subset-Prod} for the case of locally finitely presented categories.

\begin{theorem}\label{thm:triang-Add-subset-Prod}
If $M$ is an object of $\mathcal{T}$ such that for every compact $C$ the cardinal of $\Hom(C,M)$ is not $\omega$-measurable, the following are equivalent:
		\begin{enumerate}[{\rm (i)}]
			\item $\Add(M)\subseteq \Prod(M)$;
            \item $\Add(H_M)\subseteq \Prod(H_M)$;
			\item $M$ is $\Sigma$-pure-injective.
		\end{enumerate}
\end{theorem}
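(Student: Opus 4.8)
The plan is to push the entire problem through the restricted Yoneda functor $H\colon\calT\to\Modr\calT^c$ into the locally finitely presented Grothendieck category $\Modr\calT^c$, where Theorem \ref{thm:lfp-Add-subset-Prod} is already available, and then to read statement (i) back in $\calT$ directly from $\Sigma$-pure-injectivity. Concretely I would establish the equivalence (ii)$\Leftrightarrow$(iii) as an application of Theorem \ref{thm:lfp-Add-subset-Prod} to the functor $H_M$, and then close the cycle with the two one-directional implications (i)$\Rightarrow$(ii) and (iii)$\Rightarrow$(i).

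The implication (i)$\Rightarrow$(ii) is formal: $H$ is additive and commutes both with coproducts (so $H_{M^{(J)}}\simeq(H_M)^{(J)}$) and with products (each $\Hom(C,-)$ with $C\in\calT^c$ commutes with products, whence $H_{M^I}\simeq(H_M)^I$). Thus a relation in $\calT$ exhibiting $M^{(J)}$ as a direct summand of some $M^I$ is carried by $H$ to an exhibition of $(H_M)^{(J)}$ as a direct summand of $(H_M)^I$; passing to summands gives $\Add(H_M)\subseteq\Prod(H_M)$. For (iii)$\Rightarrow$(i), I would first note that for every set $I$ the canonical map $\iota\colon M^{(I)}\to M^I$ is a pure monomorphism: completing $\iota$ to a triangle, its image under $H$ is the canonical monomorphism $(H_M)^{(I)}\to(H_M)^I$ in $\Modr\calT^c$, so the induced sequence $0\to H_{M^{(I)}}\to H_{M^I}\to H_C\to 0$ is short exact and $\iota$ is pure. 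If $M$ is $\Sigma$-pure-injective then $M^{(I)}$ is pure-injective, hence $\iota$ splits and $M^{(I)}\in\Prod(M)$; closure under direct summands then yields $\Add(M)\subseteq\Prod(M)$.

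The substantive step is (ii)$\Leftrightarrow$(iii), which I would obtain by applying Theorem \ref{thm:lfp-Add-subset-Prod} to $H_M$ in $\Modr\calT^c$ and invoking that $M$ is $\Sigma$-pure-injective if and only if $H_M$ is. The only hypothesis to verify is that $|\Hom(X,H_M)|$ is non-$\omega$-measurable for every finitely presented object $X$ of $\Modr\calT^c$. Every such $X$ admits a presentation $H_{C_1}\to H_{C_0}\to X\to 0$ with $C_0,C_1\in\calT^c$, so $\Hom(X,H_M)$ embeds into $\Hom(H_{C_0},H_M)\cong H_M(C_0)=\Hom(C_0,M)$ by the Yoneda lemma; since a subgroup of a non-$\omega$-measurable group is non-$\omega$-measurable and $|\Hom(C_0,M)|$ is non-$\omega$-measurable by assumption, the hypothesis holds. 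I expect this transfer of the cardinality condition to be the main obstacle: because $H$ is neither full nor faithful one cannot argue in $\calT$ directly, and the point is the bookkeeping that non-$\omega$-measurability of the groups $\Hom(C,M)$ indexed by the compacts $C$ propagates to all finitely presented objects of $\Modr\calT^c$, which is precisely the input that Theorem \ref{thm:lfp-Add-subset-Prod} requires.
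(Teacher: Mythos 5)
Your proposal is correct and follows essentially the same route as the paper: transfer everything along the restricted Yoneda functor $H$, apply Theorem \ref{thm:lfp-Add-subset-Prod} to $H_M$ in $\Modr\calT^c$ (your presentation-plus-Yoneda argument for propagating non-$\omega$-measurability to all finitely presented objects is exactly the step the paper handles by citing \cite[Theorem 2.11]{Ek-Me}), and close the cycle with the formal implications (i)$\Rightarrow$(ii) and (iii)$\Rightarrow$(i). The only difference is one of detail: you spell out steps the paper leaves implicit, such as the purity and splitting of the canonical morphism $M^{(I)}\to M^I$ in (iii)$\Rightarrow$(i), which the paper dismisses as clear.
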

	
\begin{proof}
(i)$\Rightarrow $(ii) It is enough to show that for every set $I$ the direct sum $(H_M)^{(I)}$ belongs to $\Prod(H_M)$. This is true since $(H_M)^{(I)}\cong H_{M^{(I)}}$, and we have a splitting monomorphism $M^{(I)}\to M^J$ for some set $J$. 
        
(ii)$\Rightarrow$(iii) From Yoneda's Lemma it follows that for every compact $C$ the cardinal of the homomorphism group $\Hom_{\Modr\mathcal{T}^c}(H_C,H_M)$ is not $\omega$-measurable. From \cite[Theorem 2.11]{Ek-Me}, it follows that for every finitely presented object $K$ from $\Modr\mathcal{T}^c$ the group $\Hom_{\Modr\mathcal{T}^c}(K,H_M)$ is not $\omega$-measurable.
        
Because $\Modr\mathcal{T}^c$ is a locally finitely presented category, applying the results of the previous section, it follows that $H_M$ is $\Sigma$-pure-injective. Then $M$ is a $\Sigma$-pure-injective object in $\mathcal{T}$.
		
(iii)$\Rightarrow $(i) It is clear.
\end{proof}

\begin{remark}
For the implication (i)$\Rightarrow$(ii) we do not need the set theoretic assumption. The implications (i)$\Rightarrow$(iii) and (ii)$\Rightarrow$(iii) do not hold in ZFC. In order to see this, it is enough to consider one more time \v Saroch's example \cite[Corollary 3.5]{Saroch2015}. Let us observe that if $R$ is a ring then the standard embedding of $\Modr R$ in its derived category $\mathbf{D}(R)$ is fully faithful and it preserves all direct sums and direct products. Since in both categories $\Modr R$ and $\mathbf{D}(R)$ an object $X$ is pure-injective if and only if for every set $I$ the summation morphism $X^{(I)}\to X$ can be extended to a morphism $X^I\to X$, it follows that an $R$-module $X$ is pure-injective in $\Modr R$ if and only if it is pure-injective in $\mathbf{D}(R)$. Therefore, if there exists a module $M$ such that $M$ is not pure-injective and $\Add(M)\subseteq \Prod(M)$, then $M$ has the same properties in $\mathbf{D}(R)$.          \end{remark}

As in the case of locally finitely presented categories, the $\Sigma$-pure-injective injective objects can be characterized by some approximation properties of $\Prod(M)$. Since in this case the functor $H$ is not fully faithful, we have to use some alternative techniques based on the following results. Recall that a ring $R$ is \textit{semiregular} (or \textit{f-semiperfect}) if all idempotents lift modulo $J(R)$ and $R/J(R)$ is von Neumann regular.
 
\begin{lemma}\label{lem:end-pure-injective} 
If $M$ is a pure-injective object in $\calT$ then the endomorphism ring of $M$ is semiregular.     
\end{lemma}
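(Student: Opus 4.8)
The plan is to transport the whole question into the Grothendieck category $\Modr\calT^c$ by means of the restricted Yoneda functor $H$, where pure-injectivity of $M$ becomes genuine injectivity of $H_M$, and then to invoke the classical fact that the endomorphism ring of an injective object in a Grothendieck category is semiregular. The first thing I would establish is that $H$ induces a \emph{ring} isomorphism $\End_\calT(M)\xrightarrow{\ \sim\ }\End_{\Modr\calT^c}(H_M)$. The assignment $\phi\mapsto H_\phi=\Hom(-,\phi)|_{\calT^c}$ is precisely the map of Theorem \ref{Krausepureinjective}(iv) specialized to $X=M$; since $M$ is pure-injective, that theorem makes it bijective, while functoriality of $H$ (that is, $H_{\psi\phi}=H_\psi H_\phi$ and $H_{1_M}=1_{H_M}$) makes it multiplicative and unital. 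Hence it is a ring isomorphism, and since semiregularity is a purely ring-theoretic property it suffices to prove that $\End_{\Modr\calT^c}(H_M)$ is semiregular.

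Next I would use Theorem \ref{Krausepureinjective}(ii), by which $H_M$ is injective in the Grothendieck category $\Modr\calT^c$, and reduce to the classical statement: the endomorphism ring $S=\End(E)$ of an injective object $E$ in a Grothendieck category is semiregular. Setting $J=\{f\in S:\ker f\text{ is essential in }E\}$, one verifies that $J$ is a two-sided ideal equal to $J(S)$, that idempotents lift modulo $J$, and that $S/J$ is von Neumann regular. The regularity is where injectivity is used: given $f\in S$, choose a pseudo-complement $C$ of $\ker f$, so that $\ker f\oplus C$ is essential and $f|_C$ is monic; injectivity of $E$ lets one extend the partial inverse $(f|_C)^{-1}$ along $f(C)\hookrightarrow E$ to some $g\in S$, and then $f-fgf$ vanishes on the essential subobject $\ker f\oplus C$, so $f-fgf\in J$. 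Because $\Modr\calT^c$ possesses injective envelopes, essential subobjects, and pseudo-complements (the latter by a Zorn's-lemma argument), the module-theoretic proof transfers verbatim with ``submodule'' replaced by ``subobject.'' Applying this with $E=H_M$ shows $\End(H_M)$ is semiregular, and the ring isomorphism of the first step then yields that $\End_\calT(M)$ is semiregular.

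The main obstacle is this last ingredient: carrying out, or pinning down a precise citation for, the semiregularity of the endomorphism ring of an injective object in a Grothendieck category, since everything preceding it is formal. I expect the cleanest route to be the injective one described above; one could instead stay with the pure-injectivity of $H_M$ furnished by Theorem \ref{Krausepureinjective}(iii) and appeal to the analogous result for endomorphism rings of pure-injective objects, but that approach requires extra purity bookkeeping (working with pure-essential subobjects in place of essential ones), whereas passing to honest injectives in $\Modr\calT^c$ lets the standard essential-kernel description of the radical be used directly.
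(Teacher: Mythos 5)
Your proof is correct, and its skeleton coincides with the paper's: both transport the problem along the restricted Yoneda functor $H$, using Theorem \ref{Krausepureinjective}(iv) (at $X=M$, together with functoriality of $H$) to identify $\End_\calT(M)$ with $\End_{\Modr\calT^c}(H_M)$ as rings, and then prove semiregularity of the latter. The difference lies in the final ingredient. The paper stays with \emph{pure}-injectivity: by Theorem \ref{Krausepureinjective}(iii), $H_M$ is pure-injective in the locally finitely presented Grothendieck category $\Modr\calT^c$, and the paper simply cites Simson's result (\cite[Corollary 1.6]{Simson78}) that endomorphism rings of pure-injective objects in such categories are semiregular --- this is precisely the alternative route you mention and set aside at the end of your proposal. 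You instead exploit the stronger conclusion of Theorem \ref{Krausepureinjective}(ii), that $H_M$ is genuinely \emph{injective} in $\Modr\calT^c$, and then prove (rather than cite) the classical Utumi-type theorem that the endomorphism ring of an injective object in a Grothendieck category is semiregular, via the radical $J=\{f:\ker f\text{ essential}\}$, pseudo-complements, and extension of partial inverses. What each buys: the paper's version is a two-line citation but leans on a less standard purity result; yours is self-contained, uses only the most classical facts about injectives (essential subobjects, injective envelopes, Zorn's lemma for pseudo-complements, all available in a Grothendieck category), and makes explicit the ring isomorphism that the paper leaves implicit. The only part you state without proof --- lifting of idempotents modulo $J$ and $J=J(\End(H_M))$ --- is standard and transfers to Grothendieck categories exactly as you claim, so this is not a gap.
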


\begin{proof}
We know from \cite[Corollary 1.6]{Simson78} that in every locally finitely presented Grothendieck category the endomorphism rings of pure-injective objects are semiregular. The conclusion follows from Theorem \ref{Krausepureinjective}.     
\end{proof}

The following lemma is known for module categories. The proof presented in \cite[Lemma 5.9]{GoTr-12} can be extended to a general setting.

\begin{lemma}\label{lem:Go-Tr5.9}
Let $\CA$ be an idempotent complete additive category. Suppose that $\CC$ is a class of objects from $\CA$ closed under finite direct sums and direct summands and that $f:C\to A$ is a $\CC$-precover for $A$. If 
\begin{enumerate}[{\rm (i)}]
    \item the idempotents lift modulo the Jacobson radical of $\End(C)$, and
        \item for the right ideal $I=\{g\in \End(C)\mid fg=0\}$ there exists a right ideal $J$ such that $I+J=\End(C)$ and $I\cap J$ is contained in the radical of $\End(C)$
\end{enumerate}
then $M$ has a $\CC$-cover. 
\end{lemma}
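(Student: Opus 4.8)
The plan is to convert the $\CC$-cover question into a statement about the endomorphism ring $E = \End(C)$, so that the hypotheses (i) and (ii) become exactly the ring-theoretic conditions that force a precover to contain a cover as a direct summand. The key observation is that a $\CC$-precover $f \colon C \to A$ is a $\CC$-cover precisely when we can split off the ``superfluous'' part of $C$, and the right ideal $I = \{g \in E \mid fg = 0\}$ measures that superfluous part. First I would note that since $\CC$ is closed under finite direct sums and direct summands and $\CA$ is idempotent complete, any direct summand $C'$ of $C$ again lies in $\CC$, so restricting $f$ to a summand of $C$ still yields a morphism from an object of $\CC$; this is what lets us replace $C$ by a smaller object without leaving $\CC$.

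Next I would use hypothesis (ii) to produce an idempotent. Since $I + J = E$ and $I \cap J \subseteq J(E)$, the images $\bar I$ and $\bar J$ in $E/J(E)$ satisfy $\bar I \oplus \bar J = E/J(E)$ as right ideals, so there is an idempotent $\bar e \in E/J(E)$ with $\bar e \, (E/J(E)) = \bar I$. By hypothesis (i) I can lift $\bar e$ to an idempotent $e \in E$, and then $\CA$ being idempotent complete gives a direct-sum decomposition $C = C_0 \oplus C_1$ with $C_0 = \ima(e)$ corresponding to the part killed (modulo radical) by $f$. The intended conclusion is that $f$ restricted to the complementary summand $C_1 = \ima(1-e)$ is still a $\CC$-precover, and moreover is a $\CC$-cover: the point of passing to $C_1$ is to remove the redundancy recorded by $I$, while $I \cap J \subseteq J(E)$ guarantees that what remains is ``essential.''

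To finish I would verify the defining property of a cover for $f|_{C_1} \colon C_1 \to A$, namely that every endomorphism $h$ of $C_1$ with $f|_{C_1} \circ h = f|_{C_1}$ is an automorphism. This is where the ring theory does the real work: such an $h$ corresponds to an element of the endomorphism ring of $C_1$, which is (isomorphic to) the corner ring $(1-e)E(1-e)$, and the condition $f|_{C_1} \circ h = f|_{C_1}$ forces $1-e - h$ into the relevant part of $I$. Because $I \cap J \subseteq J(E)$ and because we have quotiented out exactly the $\bar I$-part, the remaining contribution lands in the Jacobson radical of $(1-e)E(1-e)$, so $h$ is a unit modulo the radical and hence (idempotents having been lifted, the corner ring again being semiregular-like) an automorphism.

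The main obstacle I anticipate is the last step: translating the two abstract ring-ideal hypotheses into the precise statement that $1 - e - h \in J\bigl((1-e)E(1-e)\bigr)$, and thereby that $h$ is invertible. One must be careful that the corner ring $(1-e)E(1-e)$ inherits the lifting-of-idempotents and the $J(E)$-behaviour of $E$ (standard facts about corner rings of semiregular rings, which hold here since $\End(C)$ is semiregular by Lemma \ref{lem:end-pure-injective} when $C$ is pure-injective), and that the minimality encoded by $\bar I \oplus \bar J = E/J(E)$ really does eliminate every non-automorphism factoring through $f$. Once this local-ring-theoretic computation is done correctly, the geometric statement that $f|_{C_1}$ is a $\CC$-cover follows formally, and I would model the bookkeeping on the module-category argument of \cite[Lemma 5.9]{GoTr-12}, checking at each invocation that only idempotent completeness and the stated closure properties of $\CC$ are used rather than anything special to modules.
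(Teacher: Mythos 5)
Your overall strategy coincides with the paper's (the paper simply asserts that the module-theoretic proof of \cite[Lemma 5.9]{GoTr-12} extends): pass to $E=\End(C)$, use (ii) to get an idempotent of $E/J(E)$ generating $\overline{I}$, lift it to an idempotent $e\in E$ by (i), split $C=C_0\oplus C_1$ with $C_1=\ima(1-e)\in\CC$ by idempotent completeness and closure under summands, and run the minimality check in the corner ring $(1-e)E(1-e)$. Your minimality half is essentially correct: if $f|_{C_1}h=f|_{C_1}$ and $t=\iota h\pi\in(1-e)E(1-e)$ is the element corresponding to $h$, then $f(t-(1-e))=0$, so $t-(1-e)\in I\cap(1-e)E(1-e)$; since $\overline{I}=\overline{e}\,\overline{E}$, any such element maps to $\overline{e}\,\overline{E}\cap(1-\overline{e})\overline{E}(1-\overline{e})=0$, hence lies in $J(E)\cap(1-e)E(1-e)=J\bigl((1-e)E(1-e)\bigr)$, and $t$ is invertible. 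Note that no ``semiregular-like'' property of the corner ring is needed for this last step (an element congruent to the identity modulo the Jacobson radical is invertible in any ring), and the appeal to Lemma~\ref{lem:end-pure-injective} is out of place: the present lemma has no pure-injectivity hypothesis, and its point is precisely that (i) and (ii) are the only ring-theoretic inputs.

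The genuine gap is that you never prove that $f|_{C_1}$ is a $\CC$-\emph{precover}, and this is not automatic: given $g\colon X\to A$ with $X\in\CC$ and a factorization $g=fh$, projecting onto $C_1$ gives $f(1-e)h=g-feh$, and $fe\neq 0$ in general, so naive restriction does not work. The missing argument, which is where hypothesis (ii) does essential work a second time, runs as follows. Write $1=i+j$ with $i\in I$, $j\in J$; then $i-i^2=ij=ji\in I\cap J\subseteq J(E)$, so $\overline{i}$ is an idempotent of $E/J(E)$ and $\overline{I}=\overline{i}\,\overline{E}$ (this same computation is needed to justify your unproved assertion that $\overline{E}=\overline{I}\oplus\overline{J}$; it is true, but it is a small lemma, not a formality). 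Now lift to an idempotent $e$ with $\overline{e}=\overline{i}$, so $r:=e-i\in J(E)$ and, since $fi=0$, we get $fe=fr$, hence $f(1-e)=f(1-r)$ with $1-r$ a unit of $E$. Therefore $f=f(1-e)(1-r)^{-1}$ factors through $f|_{C_1}$, so every morphism from an object of $\CC$ to $A$ that factors through $f$ also factors through $f|_{C_1}$; together with $C_1\in\CC$ this is exactly the precover property. With this step inserted, your plan becomes a complete proof along the same lines the paper intends.
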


\begin{corollary}\label{cor:enoch-triang}
Suppose that $\calT$ is an algebraic compactly generated triangulated category and $M\in \calT$ is an object such that for every compact $C$ the set $\Hom(C,M)$ is not $\omega$-measurable. The following are equivalent:
\begin{enumerate}[{\rm (i)}]
    \item $M$ is $\Sigma$-pure-injective;
  \item $\Prod(M)$ is definable;
    \item $\Prod(M)$ is a precovering class;
  \item $\Prod(M)$ is a covering class.
\end{enumerate}
\end{corollary}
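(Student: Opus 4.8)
The plan is to prove the cycle (iv)$\Rightarrow$(iii)$\Rightarrow$(i)$\Rightarrow$(ii)$\Rightarrow$(iii), which already establishes (i)$\Leftrightarrow$(ii)$\Leftrightarrow$(iii), and then to close the circle separately with the harder implication (iii)$\Rightarrow$(iv). The implication (iv)$\Rightarrow$(iii) is immediate, as every cover is a precover. For (iii)$\Rightarrow$(i) I would argue exactly as in the locally finitely presented case (Corollary \ref{cor:angeleri-6.11}): since $\Prod(M)$ is precovering and $\calT$ is idempotent complete with direct sums, Lemma \ref{lem:Prod-precover} shows $\Prod(M)$ is closed under direct sums; as $M\in\Prod(M)$ this gives $M^{(J)}\in\Prod(M)$ for every set $J$, i.e. $\Add(M)\subseteq\Prod(M)$. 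The hypothesis that $\Hom(C,M)$ is non-$\omega$-measurable for all compact $C$ then lets me invoke Theorem \ref{thm:triang-Add-subset-Prod} to conclude that $M$ is $\Sigma$-pure-injective.

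For (i)$\Rightarrow$(ii) the idea is to transfer definability through the restricted Yoneda functor $H$. If $M$ is $\Sigma$-pure-injective then so is $H_M$ in the locally finitely presented Grothendieck category $\Modr\calT^c$, and there $\Prod(H_M)$ is a definable subcategory. I would then identify $\Prod(M)=\{N\in\calT\mid H_N\in\Prod(H_M)\}$: the inclusion $\subseteq$ uses that $H$ preserves products and direct summands, while for $\supseteq$ one uses Theorem \ref{Krausepureinjective}(iv), which makes $H$ behave like a fully faithful functor on the pure-injective objects involved, so that a splitting of $H_N$ off a power of $H_M$ lifts to a splitting in $\calT$. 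In verifying that $\Prod(M)$ is itself a definable subcategory of $\calT$ one checks closure under products (trivial), under pure subobjects and pure quotients (Corollary \ref{cor:closure-triangulated}(b),(c), since the objects of $\Prod(M)$ are $\Sigma$-pure-injective, so the relevant pure monomorphisms and epimorphisms split), and under direct sums; the last point is the delicate one, where one uses the $\Sigma$-structure via Theorem \ref{theoremBT}, namely that every product $M^I$, and hence every direct sum of the indecomposable summands occurring in the $M^I$, is again pure-injective, so such direct sums embed purely in and split off suitable products.

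For (ii)$\Rightarrow$(iii) I would use that definable subcategories of a compactly generated triangulated category are precovering: such a class is closed under products and direct limits, so a precover can be produced either by the standard solution-set/adjoint argument or by transporting through $H$ the corresponding precover in $\Modr\calT^c$. This step is conceptually routine once definability is in hand, and is precisely why condition (ii) is inserted as a bridge (it replaces the appeal to \cite[Theorem 2.4]{LV} that was available in the locally finitely presented setting).

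Finally, for (iii)$\Rightarrow$(iv) — which I expect to be the main obstacle — I would promote a given $\Prod(M)$-precover $f\colon C\to A$ to a cover using Lemma \ref{lem:Go-Tr5.9}. Having (iii), the already-established implication (iii)$\Rightarrow$(i) tells me $M$ is $\Sigma$-pure-injective, so every object of $\Prod(M)$, in particular $C$, is pure-injective; Lemma \ref{lem:end-pure-injective} then gives that $S=\End(C)$ is semiregular, so idempotents lift modulo $J(S)$ and $S/J(S)$ is von Neumann regular, which is exactly hypothesis (i) of Lemma \ref{lem:Go-Tr5.9}. The genuinely delicate point is hypothesis (ii): for the right ideal $I=\{g\in S\mid fg=0\}$ I must produce a right ideal $J$ with $I+J=S$ and $I\cap J\subseteq J(S)$. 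I would obtain this from the regularity of $S/J(S)$ together with the minimality inherent in a precover, by showing that the image of $I$ in $S/J(S)$ is a direct summand and lifting an idempotent to split off the complement $J$. This ideal-theoretic verification, rather than any formal manipulation, is where the real work lies; it is also where the \emph{algebraic} hypothesis on $\calT$ is used, guaranteeing that the functor-category techniques underlying Lemmas \ref{lem:end-pure-injective} and \ref{lem:Go-Tr5.9} (and the definability transfer above) apply.
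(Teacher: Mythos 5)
Your cycle (iv)$\Rightarrow$(iii)$\Rightarrow$(i)$\Rightarrow$(ii)$\Rightarrow$(iii) matches the paper's structure: (iii)$\Rightarrow$(i) via Lemma \ref{lem:Prod-precover} and Theorem \ref{thm:triang-Add-subset-Prod} is exactly the paper's argument, and (i)$\Rightarrow$(ii), (ii)$\Rightarrow$(iii) are in substance the paper's citations of \cite[Theorem 4.7]{LV} with Corollary \ref{cor:closure-triangulated}, and of \cite[Corollary 4.8]{LV} (your detour through $H$ and the worry about closure under direct sums are unnecessary --- direct sums are pure subobjects of products, and closure under direct sums is not part of the definability criterion --- but not wrong).

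The genuine gap is in (iii)$\Rightarrow$(iv). You start from an \emph{arbitrary} $\Prod(M)$-precover $f\colon C\to A$ and propose to verify hypothesis (ii) of Lemma \ref{lem:Go-Tr5.9} for $I=\{g\in\End(C)\mid fg=0\}$ by ``showing the image of $I$ in $S/J(S)$ is a direct summand,'' appealing to von Neumann regularity and to ``the minimality inherent in a precover.'' Neither ingredient delivers this: in a von Neumann regular ring only \emph{finitely generated} right ideals are guaranteed to be direct summands (an arbitrary right ideal, e.g.\ the direct sum ideal in an infinite product of fields, need not be), and precovers have no minimality property whatsoever --- minimality is exactly what distinguishes a cover from a precover, i.e.\ what you are trying to prove. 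So you must show $I$ is finitely generated, and for an arbitrary precover $C\to A$ this can fail. This is precisely where the paper's real work lies: it replaces the given precover by one of the special form $\alpha\colon M^J\to A$ with $J$ infinite, completes $\alpha$ to a triangle $C\overset{\gamma}\to M^J\overset{\alpha}\to A\to C[1]$, and chooses a $\Prod(M)$-precover $\beta\colon M^K\to C$ with $|K|\leq|J|$ (this requires the cardinality bookkeeping with $M^J=M^I\oplus M^{J\setminus I}$). Then exactness of $\Hom(M^J,-)$ on the triangle gives $\Ker(\Hom(M^J,\alpha))=\mathrm{im}\,\Hom(M^J,\gamma)$, the precover property of $\beta$ gives surjectivity of $\Hom(M^J,M^K)\to\Hom(M^J,C)$, and since $M^K$ is a direct summand of $M^J$ (as $|K|\leq|J|$ and $J$ is infinite, via a split epimorphism $\pi\colon M^J\to M^K$) every element of the kernel ideal factors as $\gamma\beta\pi\epsilon$ with $\epsilon\in\End(M^J)$; hence $I$ is a cyclic right ideal, and only then does semiregularity of $\End(M^J)$ (Corollary \ref{cor:closure-triangulated} plus Lemma \ref{lem:end-pure-injective}) produce the complement $J$ needed in Lemma \ref{lem:Go-Tr5.9}. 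Without this reduction your argument does not go through.
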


\begin{proof} 
(i)$\Rightarrow$(ii) This follows from \cite[Theorem 4.7]{LV} and Corollary \ref{cor:closure-triangulated}.

(ii)$\Rightarrow$(iii) It is enough to apply \cite[Corollary 4.8]{LV}.

(iii)$\Rightarrow$(i) It is a consequence of Lemma \ref{lem:Prod-precover} and Theorem \ref{thm:triang-Add-subset-Prod}. 

(iv)$\Rightarrow$(iii) This is obvious.

(iii)$\Rightarrow$(iv) We will follow the main ideas presented in the proof of \cite[Proposition 4.1]{angeleri2003}.

Let $A$ be an object from $\calT$. It is not hard to see that it has a $\Prod(M)$-precover of the form $\alpha:M^I\to A$. We complete $\alpha$ to a triangle $B\overset{\gamma}\to M^I\overset{\alpha}\to A\to B[1] ,$ and consider a $\Prod(M)$-precover $\beta:M^J\to B$. If $|J|>|I|$, we can assume $I\subseteq J$ and that $J$ is infinite. We write $M^J=M^I\oplus M^{J\setminus I}$ and we have the triangle 
$$ B\oplus M^{J\setminus I}\overset{\gamma\oplus 1}\longrightarrow M^I\oplus M^{J\setminus I}\overset{\alpha\oplus 0}\longrightarrow A\to (B\oplus M^{J\setminus I})[1]. $$
Observe that $\alpha\oplus 0$ is a $\Prod(M)$-precover for $A$ and that $\beta\oplus \pi:M^J\oplus M^J\to B\oplus M^{J\setminus I}$, where $\pi$ is the canonical projection, is a $\Prod(M)$-precover for $B\oplus M^{J\setminus I}$. 

It follows that we always have a triangle 
$$C\overset{\gamma}\to M^{J}\overset{\alpha}\to A\to C[1] \text{ and a morphism } \beta:M^K\to C$$
such that $\alpha$ and $\beta$ are $\Prod(M)$-precovers for $A$ and $C$, respectively, and $|K|\leq |J|$. Applying the functor $\Hom(M^J,-):\calT\to \Modr\End(M^J)$, we obtain the exact sequences of $\End(M^J)$-modules:
$$\Hom(M^J,C)\to \Hom(M^J,M^{J})\to \Hom(M^J,A)\to 0$$ \text{ and } $$\Hom(M^J,M^K)\to \Hom(M^J,C)\to 0.$$ 
It follows that $\Ker(\Hom(M^J,\alpha))=\{\delta\in\End(M^J)\mid \alpha\delta=0\}$ is a finitely generated right ideal of $\End(M^J)$. Using Corollary \ref{cor:closure-triangulated} and Lemma \ref{lem:end-pure-injective} we observe that $\End(M^J)$ is semiregular, hence every finitely generated right ideal is a direct summand modulo the Jacobson radical. We apply Lemma \ref{lem:Go-Tr5.9} to conclude that $A$ has a $\Prod(M)$-cover.  
\end{proof}

We can formulate a triangulated version of Enochs' conjecture: \textit{every covering class in an algebraic compactly generated triangulated category is closed under directed homotopy colimits.}

\begin{corollary}\label{cor:Enochs-triangulated}
Assume that there are no $\omega$-measurable cardinals. Then the triangulated Enochs's conjecture is valid for classes of the form $\Prod(M)$ in algebraic compactly generated triangulated categories.      
\end{corollary}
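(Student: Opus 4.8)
The plan is to deduce this exactly as one would prove the locally finitely presented analogue in Corollary \ref{cor:enochs-prod-lfp}, by feeding the covering hypothesis into Corollary \ref{cor:enoch-triang}. First I would observe that the standing set-theoretic assumption does all the work of the cardinality condition: since there are no $\omega$-measurable cardinals at all, for every object $M\in\calT$ and every compact object $C$ the cardinal $\Hom(C,M)$ is automatically non-$\omega$-measurable. Thus the hypotheses of Corollary \ref{cor:enoch-triang} are satisfied for every $M$, with no extra assumption on the object.

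Next, let $M\in\calT$ be an object such that $\Prod(M)$ is a covering class. Applying the implication (iv)$\Rightarrow$(ii) of Corollary \ref{cor:enoch-triang}, I conclude that $\Prod(M)$ is definable (equivalently, that $M$ is $\Sigma$-pure-injective). This is the only step where the set-theoretic hypothesis actually enters, routed through the chain of equivalences of Corollary \ref{cor:enoch-triang}; everything else is formal.

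Finally, I would invoke the fact that a definable subcategory of a compactly generated triangulated category is closed under directed homotopy colimits, which is one of the basic closure properties characterizing definable subcategories (see \cite{LV}). Consequently $\Prod(M)$ is closed under directed homotopy colimits, which is precisely the assertion of the triangulated Enochs conjecture for this class. I do not expect a serious obstacle here: the only point requiring care is to confirm that the notion of directed homotopy colimit appearing in the formulation of the conjecture coincides with the one under which definable subcategories are known to be closed. Granting the closure results of \cite{LV}, the corollary then follows immediately from Corollary \ref{cor:enoch-triang}, in complete parallel with the locally finitely presented case.
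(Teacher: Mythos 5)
Your proposal is correct and takes essentially the same route as the paper: both arguments funnel the covering hypothesis through Corollary \ref{cor:enoch-triang} (with the absence of $\omega$-measurable cardinals making its cardinality hypothesis on $\Hom(C,M)$ vacuous) and then conclude closure under directed homotopy colimits. The only cosmetic difference is the last step: the paper extracts from the proof of \cite[Proposition 6.8]{BW} that the canonical morphism $\bigoplus_{i\in I}X_i\to\mathrm{hocolim}_{i\in I}X_i$ is a pure epimorphism and uses closure of $\Prod(M)$ under direct sums and pure quotients, whereas you cite the closure of definable classes under directed homotopy colimits from \cite{LV} --- the same purity fact in packaged form, applicable here exactly because algebraic compactly generated triangulated categories underlie strong stable derivators.
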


\begin{proof}
Suppose that $\calT$ is an algebraic compactly generated triangulated category. From the proof of \cite[Proposition 6.8]{BW} it follows that if $X_i$ is a directed family in $\calT$ then the canonical morphism $\oplus_{i\in I}X_i\to\mathrm{hocolim}_{i\in I}X_i$ is a pure epimorphism. Using Corollary \ref{cor:enoch-triang}, we obtain the conclusion.
\end{proof}

\subsection{Product-complete objects}
As before, we have a triangulated version for the characterization of product-complete objects in locally finitely presented categories. 

For information on underlying categories of strong and stable derivators, we refer to \cite{La}. 
  
\begin{theorem}\label{thm:triang-Prod-subset-Add}
Let $M$ be an object of $\mathcal{T}$. The following are equivalent:
\begin{enumerate}[{\rm (i)}]
			\item $\Prod(M) = \Add(M)$;
			
			\item $\Prod(M)\subseteq \Add(M)$;
			
			\item $\Prod(H_M)\subseteq \Add(H_M)$;
			 
			\item \begin{enumerate}[{\rm (a)}]
				\item $M$ is $\Sigma$-pure-injective;
				\item $M$ is product-rigid.
			\end{enumerate}
\end{enumerate}
        
Moreover, if $\calT$ is the underlying category of a strong and stable derivator the above conditions are equivalent with 
   \begin{itemize}
            \item[{\rm (v)}] $\Add(M)$ is definable;
            \item[{\rm (vi)}] \begin{enumerate}[{\rm (a)}] 
            \item $M$ is $\Sigma$-pure-injective,
            \item the set of all indecomposable direct summands of $M$ form a Ziegler closed set.
            \end{enumerate}  
        \end{itemize}
	\end{theorem}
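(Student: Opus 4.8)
The plan is to reduce the equivalence of (i)--(iv) to Theorem \ref{thm:productcompleteA}, applied to the object $H_M$ of the locally finitely presented Grothendieck category $\Modr\calT^c$, and to treat the derivator conditions (v)--(vi) afterwards by means of the theory of definable subcategories. The one genuinely new point, relative to the locally finitely presented situation, is that the restricted Yoneda functor $H$ is not fully faithful, so product-rigidity cannot merely be read off along $H$; the way around this is that $H$ is fully faithful on the subcategory of pure-injective objects, which is the content of Theorem \ref{Krausepureinjective}(iv). The implication (i)$\Rightarrow$(ii) is trivial. For (ii)$\Rightarrow$(iii), I would use that compact objects detect coproducts, so that $H_{M^I}\cong (H_M)^I$ and $H_{M^{(I)}}\cong (H_M)^{(I)}$, together with the fact that the additive functor $H$ preserves direct summands; hence each instance of $\Prod(M)\subseteq\Add(M)$ is carried to the corresponding instance of $\Prod(H_M)\subseteq\Add(H_M)$.

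For (iii)$\Rightarrow$(iv), apply Theorem \ref{thm:productcompleteA} to $H_M$ in $\Modr\calT^c$ to learn that $H_M$ is $\Sigma$-pure-injective and product-rigid. Because $M$ is $\Sigma$-pure-injective in $\calT$ exactly when $H_M$ is $\Sigma$-pure-injective in $\Modr\calT^c$, condition (iv)(a) follows. For (iv)(b), take $X\in\Prod(M)$ with local endomorphism ring; since $M$ is $\Sigma$-pure-injective, $X$ is pure-injective, so Theorem \ref{Krausepureinjective}(iv) gives a ring isomorphism $\End(X)\cong\End(H_X)$, whence $H_X\in\Prod(H_M)$ has local endomorphism ring. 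Product-rigidity of $H_M$ makes $H_X$ a direct summand of $H_M$; transporting the associated idempotent through $\End(H_M)\cong\End(M)$ and splitting it in the idempotent-complete category $\calT$ produces $M\cong M_1\oplus M_2$ with $H_{M_1}\cong H_X$, and full faithfulness of $H$ on pure-injectives then yields $X\cong M_1$, a direct summand of $M$.

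For (iv)$\Rightarrow$(i), the $\Sigma$-pure-injectivity of $M$ already gives $\Add(M)\subseteq\Prod(M)$ (the set-theory-free implication of Theorem \ref{thm:triang-Add-subset-Prod}). Conversely, for a direct summand $X$ of a product $M^I$, Theorem \ref{theoremBT} decomposes $M^I\cong\bigoplus_{j\in J}X_j$ into indecomposable pure-injectives with local endomorphism rings; each $X_j$ lies in $\Prod(M)$ and so, by product-rigidity, is a direct summand of $M$, whence $M^I\in\Add(M)$ and $X\in\Add(M)$. This establishes $\Prod(M)=\Add(M)$ and closes the cycle.

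For the final part I would argue, under the hypothesis that $\calT$ underlies a strong and stable derivator, as follows. If (iv) holds then, since $M$ is $\Sigma$-pure-injective, $\Prod(M)$ is definable by \cite[Theorem 4.7]{LV} and Corollary \ref{cor:closure-triangulated}, and since $\Add(M)=\Prod(M)$ this is exactly (v); conversely a definable class is closed under products and under pure subobjects, hence under direct summands, so (v) places every $M^I$ and each of its summands in $\Add(M)$, giving (ii). For (vi), condition (a) is shared with (iv); under $\Sigma$-pure-injectivity Theorem \ref{theoremBT} identifies the indecomposable direct summands of $M$ with the points appearing in a decomposition of $M$, and I would invoke the bijection between definable subcategories and Ziegler-closed subsets of the spectrum of indecomposable pure-injectives to rephrase product-rigidity: it asserts precisely that the indecomposable pure-injectives belonging to the definable class $\Prod(M)$ are exactly the indecomposable summands of $M$, i.e. that this set is Ziegler closed. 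I expect this last equivalence, (iv)(b)$\Leftrightarrow$(vi)(b), together with the definability statement (v), to be the main obstacle: it is where the derivator hypothesis is genuinely used, since it relies on a well-behaved Ziegler spectrum and on identifying the indecomposable pure-injectives of $\Prod(M)$ with the Ziegler closure of the summands of $M$, and thus must draw on the purity theory of \cite{LV} and \cite{La} rather than on any transfer along the non-faithful functor $H$.
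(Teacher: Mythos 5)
Your proposal is correct and matches the paper's proof essentially step for step: the equivalence of (i)--(iv) is obtained, exactly as in the paper, by transporting the problem along $H$ to $\Modr\calT^c$ and applying Theorem \ref{thm:productcompleteA}, with Theorem \ref{Krausepureinjective}(iv) (full faithfulness of $H$ on pure-injectives) used to pull product-rigidity back from $H_M$ to $M$, and Theorem \ref{theoremBT} used for (iv)$\Rightarrow$(i). The derivator part also follows the paper's route --- closure properties plus a definability criterion (the paper cites \cite[Proposition 6.8]{BW} where you cite \cite[Theorem 4.7]{LV}) and then the definable/Ziegler-closed correspondence of \cite{Krause2002} for (vi); the converse direction (vi)(b)$\Rightarrow$(iv)(b) that you flag as the main remaining obstacle is precisely the step the paper itself handles only by citing the proof of \cite[Proposition 12.3.7]{Krause2022}, so your level of detail there coincides with the paper's.
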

\begin{proof}
(i)$\Rightarrow$ (ii) is obvious. 
        
(ii)$\Rightarrow $(iii) This follows by using steps similar to those presented in Theorem \ref{thm:triang-Add-subset-Prod}.
		
(iii)$\Rightarrow$ (iv) We have $\Prod(H_M)\subseteq \Add(H_M)$, and we use Theorem \ref{thm:productcompleteA} to observe that $H_M$ is $\Sigma$-pure-injective and product-rigid. Since $H_M$ is $\Sigma$-pure-injective, we also obtain that $M$ is $\Sigma$-pure-injective.
		
If $X\in \Prod(M)$ is an object with local endomorphism ring then $X$ is pure-injective, hence $\Hom(X,X)\simeq \Hom(H_X,H_X)$. Then the endomorphism ring of $H_X$ is local, and we obtain that $H_X$ is isomorphic to a direct summand of $H_M$. Using Theorem \ref{Krausepureinjective} one more time, it follows that $X$ is isomorphic to a direct summand of $M$.
		
		
		(iv)$\Rightarrow$ (i) Since $\Add(M)\subseteq \Prod(M)$ for any $\Sigma$-pure-injective object $M$, we have to prove the inclusion $\Prod(M)\subseteq \Add(M)$. In fact it is enough to prove that  $\prod_{i\in I} M\in\Add(M)$ for any set $I$.
		
Because $M$ is $\Sigma$-pure-injective, $\prod_{i\in I} M$ is isomorphic to a direct sum of indecomposable pure-injective objects with local endomorphism rings (see Theorem \ref{theoremBT}), so $\prod_{i\in I} M\simeq \bigoplus_{j\in J} X_j$ such that $X_j$ are objects with local endomorphism rings from $\Prod(M)$.
Since $M$ is product-rigid, every $X_j$ is isomorphic to a direct summand of $M$, 
so $\prod_{i\in I} M\in \Add(M)$. 

(i)$\Rightarrow$(v) From Corollary \ref{cor:closure-triangulated}, it follows that the class $\Add(M)$ is closed under pure subobjects, products and pure quotients. Now we use \cite[Proposition 6.8]{BW} to conclude that $\Add(M)$ is a definable class. 

The proofs for (v)$\Rightarrow$(vi) and (vi)$\Rightarrow$(i) are the same as those presented for finitely presented categories in \cite[Proposition 12.3.7]{Krause2022}, this time using the fundamental correspondence presented in \cite{Krause2002}.
\end{proof}
	
\begin{remark}
The equivalence (ii)$\Leftrightarrow$(iii) says that an object $M$ is product-complete in $\mathcal{T}$ if and only if $H_M$ is product-complete in $\Modr\mathcal{T}^c$ and the equivalence (iii)$\Leftrightarrow$(iv) also says that, under the hypothesis of $\Sigma$-pure-injectivity on the object $M$, $M$ is product-rigid in $\mathcal{T}$ if and only if $H_M$ is product-rigid in $\Modr\mathcal{T}^c$. We do not know if this correspondence between product-rigid objects holds in the absence of the pure-injectivity hypothesis.
\end{remark}

We also have a triangulated version, with a similar proof, of Corollary \ref{cor:add-env}. 

\begin{corollary}
The following are equivalent for an object $M\in\calT$:
\begin{enumerate}[{\rm (i)}]
    \item $M$ is product-complete;
    \item $\Add(M)$ is a preenveloping class in $\CA$;
    \item $\Add(M)$ is an enveloping class. 
\end{enumerate}
\end{corollary}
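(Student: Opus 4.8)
The plan is to follow the proof of Corollary~\ref{cor:add-env} verbatim for the first two implications, and to replace the fully faithful functor $ev$ used there by the restricted Yoneda functor $H\colon\calT\to\Modr\calT^c$, compensating for the lack of full faithfulness of $H$ by working only with pure-injective objects, on which $H$ induces isomorphisms on Hom-groups by Theorem~\ref{Krausepureinjective}(iv).

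First I would establish (i)$\Leftrightarrow$(ii). For (i)$\Rightarrow$(ii), the hypothesis $\Prod(M)\subseteq\Add(M)$ gives $\Add(M)=\Prod(M)$ by Theorem~\ref{thm:triang-Prod-subset-Add}; since every class of the form $\Prod(M)$ is preenveloping (the canonical morphism $A\to M^{\Hom(A,M)}$ is a $\Prod(M)$-preenvelope), $\Add(M)$ is preenveloping. For (ii)$\Rightarrow$(i), I would invoke the dual of Lemma~\ref{lem:Prod-precover}: since $\calT$ is idempotent complete and has products, a preenveloping class is closed under direct products, so $M^I\in\Add(M)$ for every set $I$, and as $\Add(M)$ is closed under direct summands this yields $\Prod(M)\subseteq\Add(M)$.

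The substantive step is (i)$\Rightarrow$(iii); the implication (iii)$\Rightarrow$(ii) is trivial and closes the cycle. Assuming (i), we have $\Add(M)=\Prod(M)$ and, by Theorem~\ref{thm:triang-Prod-subset-Add}, $M$ is $\Sigma$-pure-injective, whence every object of $\Add(M)=\Prod(M)$ is pure-injective (products and summands of pure-injectives are pure-injective). Moreover $H_M$ is product-complete in the locally finitely presented category $\Modr\calT^c$ by Theorem~\ref{thm:triang-Prod-subset-Add}(iii), so Corollary~\ref{cor:add-env} shows that $\Add(H_M)=\Prod(H_M)$ is enveloping there. Given $A\in\calT$, I would take an $\Add(H_M)$-envelope $\bar\psi\colon H_A\to\bar C$. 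Since $\bar C$ is a direct summand of some $(H_M)^I\cong H_{M^I}$ and $M^I$ is pure-injective, Theorem~\ref{Krausepureinjective}(iv) gives a ring isomorphism $\End(M^I)\cong\End(H_{M^I})$; thus the idempotent cutting out $\bar C$ lifts to an idempotent of $\End(M^I)$, which splits because $\calT$ is idempotent complete, producing $C\in\Add(M)$ with $H_C\cong\bar C$. Using $\Hom(A,C)\cong\Hom(H_A,H_C)$ for the pure-injective $C$, I would then lift $\bar\psi$ to a morphism $\psi\colon A\to C$.

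It remains to check that $\psi$ is an $\Add(M)$-envelope, which I expect to transfer formally along $H$: for the preenvelope property, any $g\colon A\to D$ with $D\in\Add(M)$ yields $H_g=\bar h\,\bar\psi$ for some $\bar h\colon H_C\to H_D$, and $\bar h$ lifts to $h\colon C\to D$ with $h\psi=g$, both lift and factorization being legitimate because $\Hom(A,D)\cong\Hom(H_A,H_D)$ for the pure-injective $D$; for minimality, any $h\in\End(C)$ with $h\psi=\psi$ satisfies $H_h\,\bar\psi=\bar\psi$, so $H_h$ is an automorphism of $H_C$, and the ring isomorphism $\End(C)\cong\End(H_C)$ forces $h$ to be an automorphism. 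The main obstacle is precisely that $H$ is not fully faithful on all of $\calT$, so neither the realization of $\bar C$ as $H_C$ nor the lifting and reflection of the (pre)envelope morphisms is automatic; the key that unlocks everything is that $\Add(M)=\Prod(M)$ consists of pure-injective objects, where Theorem~\ref{Krausepureinjective}(iv) restores exactly the Hom-isomorphisms needed.
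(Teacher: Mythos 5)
Your proof is correct, and for (i)$\Leftrightarrow$(ii) it coincides with the paper's argument: $\Add(M)=\Prod(M)$ via Theorem \ref{thm:triang-Prod-subset-Add}, the fact that every class $\Prod(M)$ is preenveloping, and the dual of Lemma \ref{lem:Prod-precover} for the converse. For the enveloping implication, however, you take a genuinely different route. The paper's proof is a one-line deferral: as in Corollary \ref{cor:add-env}, prove (ii)$\Rightarrow$(iii) by rerunning the proof of \cite[Theorem 2.3]{Kr-Sa}, i.e., take an $\Add(M)$-preenvelope and minimize it by means of \cite[Proposition 1.2]{Kr-Sa} (a splitting-off-the-minimal-part result for maps into pure-injective objects), transported this time along $H$ instead of $ev$. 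You instead prove (i)$\Rightarrow$(iii) by applying the paper's own Corollary \ref{cor:add-env} to $H_M$ inside the locally finitely presented category $\Modr\calT^c$ (product-completeness of $H_M$ being condition (iii) of Theorem \ref{thm:triang-Prod-subset-Add}) and then lifting the resulting $\Add(H_M)$-envelope of $H_A$ back to $\calT$: the object lifts because the ring isomorphism $\End(M^I)\cong \End(H_{M^I})$ lets you lift the defining idempotent, which splits since $\calT$ has coproducts; the morphism, the factorizations, and the minimality lift because every object of $\Add(M)=\Prod(M)$ is pure-injective and Theorem \ref{Krausepureinjective}(iv) makes $H$ fully faithful into pure-injective targets. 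This last point is exactly what the paper's phrase ``with a similar proof'' leaves implicit --- the paper itself stresses that $H$ is not fully faithful, which is why its covering-class result (Corollary \ref{cor:enoch-triang}) resorts to semiregular endomorphism rings --- and your write-up supplies the missing verification. What each approach buys: the paper's citation of \cite{Kr-Sa} is shorter but leaves the transfer along the non-fully-faithful $H$ to the reader; your argument is self-contained within the paper's own results (Theorems \ref{Krausepureinjective} and \ref{thm:triang-Prod-subset-Add}, Corollaries \ref{cor:add-env} and \ref{cor:closure-triangulated}) at the cost of routing everything through the equivalence with product-completeness of $H_M$, which you establish correctly.
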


\section*{Acknowledgements}
C. Rafiliu was supported by the project CNFIS-FDI-2024-F-0456.


\begin{thebibliography}{99}
\bibitem{angeleri2003} L. Angeleri-H\"{u}gel, \emph{Covers and envelopes via endoproperties of modules}, Proc. London Math. Soc. 86 (2003), 649--665.	

\bibitem{angeleri2002} L. Angeleri-H\"{u}gel, \emph{Direct summands of products}, Arch. Math. 78 (2002), 12--23.
    
    \bibitem{Angeleri2000}		L. Angeleri-H\"{u}gel,
	\emph{On some precovers and preenvelopes}, Habilitationsschrift, 2000.

\bibitem{AnSaTr}  L. Angeleri-H\"{u}gel, J. \v Saroch, J. Trlifaj, \emph{On the telescope conjecture for module categories}, J. Pure Appl. Algebra 212 (2008), 297--310.
        
	\bibitem{Bennett-Tennenhaus2023}		R. Bennett-Tennenhaus,
	\emph{Characterisations of $\Sigma$-pure-injectivity in triangulated categories and applications to endoperfect objects}, Fund. Math. \textbf{261} (2023), No. 2, 133--155.

\bibitem{BW} I. Bird, J. Williamson, \emph{Duality pairs, phantom maps, and definability in triangulated categories}, to appear in Proc. R. Soc. Edinb. Sect. A., https://doi.org/10.1017/prm.2024.73.
        
	\bibitem{Breaz2015}		S. Breaz,
	\emph{$\Sigma$-pure injectivity and Brown Representability}, Proc. Amer. Math. Soc. \textbf{143} (2015), No. 7, 2789--2794.

\bibitem{BZ} S. Breaz, and J. Zemli\v cka, \emph{The defect functor of a homomorphism and direct unions}, Algebr. Represent. Theory 19 (2016), 181--208.
        
	\bibitem{Chase1962}		S.U. Chase,
	\emph{On direct sums and products of modules}, Pacific J. Math. \textbf{12} (1962), No. 3, 847--854.

\bibitem{Co-Sa} M. Cort\' es-Izurdiaga, J. \v Saroch, \emph{The cotorsion pair generated by the Gorenstein projective modules and $\lambda$-pure-injective modules}, arXiv:2104.08602 [math.RT] (2021).
        
	\bibitem{Dugas&Zimmermann-Huisgen1981}		M. Dugas, B. Zimmermann-Huisgen,
	\emph{Iterated direct sums and products of modules}, Abelian group theory (Oberwolfach, 1981), Lecture Notes in Math., vol. 874, Springer, Berlin, 1981, 179--193.

\bibitem{Ek-Me} P. C. Eklof, A. H. Mekler. Almost free modules. Set-theoretic methods, volume 65 of North-Holland
Mathematical Library. North-Holland Publishing Co., Amsterdam, revised edition, 2002.
        
	\bibitem{Garkusha&Prest2005}	G. Garkusha, M. Prest,
	\emph{Triangulated categories and Ziegler spectrum}, Algebr. Represent. Theory \textbf{8} (2005), No. 4, 499--523.

    \bibitem{GoTr-12}  R. G\"obel, J. Trlifaj, Approximations and Endomorphism Algebras of Modules, de Gruyter Expositions in Mathematics 41, 2nd revised and extended edition, Berlin-Boston 2012.

    \bibitem{HJ-08}  H. Holm and P. J\o rgensen, \emph{Covers, precovers, and purity}, Illinois J. Math.
 52 (2008), 691--703.

\bibitem{HM24} M. Hrbek, L. Martini. \emph{Product-complete tilting complexes and Cohen--Macaulay hearts}, Revista Matemática Iberoamericana 40 (2024), 2339--2369.
        
	\bibitem{Huisgen-Zimmermann2000}		B. Huisgen-Zimmermann,
	\emph{Purity, algebraic compactness, direct sum decompositions, and representation type}, in: Infinite Length Modules (Bielefeld, 1998), Trends. Math., Birkhäuser, Basel, 2000, 331--367.


        
	\bibitem{Krause2021}	H. Krause,
	\emph{Chase's lemma and its context}, Expo. Math. \textbf{39} (2021), No. 4, 583--589.
		
	\bibitem{Krause2022}	H. Krause,
	\emph{Homological Theory of Representations}, Cambridge University Press, 2022.

\bibitem{Krause2002} H. Krause, \emph{Coherent functors in stable homotopy theory}, Fundamenta Math. 173 (2002), 33--56.
        
	\bibitem{Krause2000}	H. Krause,
	\emph{Smashing subcategories and the telescope and the telescope conjecture - an algebraic approach}, Invent. Math. \textbf{139} (2000), 99--133.

\bibitem{Kr-Sa} H. Krause, M. Saorín, \emph{On minimal approximations of modules}, in: E.L. Green, B. Huisgen-Zimmermann (Eds.), Trends in the Representation Theory of Finite Dimensional Algebras, Contemp. Math. 229 (1998), 227--236.

\bibitem{La}  R. Laking, \emph{Purity in compactly generated derivators and t-structures with Grothendieck hearts},  Math. Z. 295 (2020), 1615--1641.

\bibitem{LV} R. Laking and J. Vit\'oria, \emph{Definability and approximations in triangulated categories}, Pacific J. Math. 306 (2020), 557--586.

    \bibitem{Prest23} M. Prest, R. Wagstaffe, \emph{Model theory in compactly generated (tensor-)triangulated categories}, Model Theory 3 (2024), 147--197.

    \bibitem{Sar-Enochs} J. \v{S}aroch, \emph{Enochs’ conjecture for small precovering classes of modules}, Israel J. Math. 255 (2023), 401--415.
		
	\bibitem{Saroch2015}		J. \v{S}aroch,
	\emph{$\Sigma$-algebraically compact modules and $\mathcal{L}_{\omega_1\omega}$-compact cardinals}, Math. Log. Quart. \textbf{61} (2015), No. 3, 196--201.

    \bibitem{Sa-Tr}  J. \v Saroch, J. Trlifaj, \emph{Test sets for factorization properties of modules}, Rendiconti Semin. Matem.  Univ. Padova 144 (2020), 217--238.

    \bibitem{Simson78} D. Simson, \emph{On pure semi-simple Grothendieck categories I}, Fund. Math.,
100 (1978), 211--222.
		
	\bibitem{Walker&Warfield1976}		C.L. Walker, R.B. Warfield Jr.,
	\emph{Unique decomposition and isomorphic refinement theorems in additive categories}, J. Pure Appl. Algebra \textbf{7} (1976), No. 3, 347--359.
		
	\bibitem{Zimmermann-Huisgen1979}		B. Zimmermann-Huisgen,
	\emph{Rings whose right modules are direct sums of indecomposable modules}, Proc. Am. Math. Soc. \textbf{77} (1979), No. 2, 191--197.
\end{thebibliography}
\end{document}